\newtheorem{theorem}{Theorem}[section]
\newtheorem{lemma}[theorem]{Lemma}
\newtheorem{prop}[theorem]{Proposition}
\newtheorem{coro}[theorem]{Corollary}
\theoremstyle{definition}
\newtheorem{rema}[theorem]{Remark}
\newcommand{\PSH}{{\rm PSH}}
\newcommand{\psh}{{\rm PSH}}
\newcommand{\vol}{{\rm Vol}}
\newcommand{\setdef}{\; | \;}
\title[Stability of Complex Monge-Amp\`ere equations]{Stability and H\"older regularity of solutions to complex Monge-Amp\`ere equations on compact  Hermitian manifolds}
\author{Chinh H. Lu}
\address{Universit\'e Paris-Saclay, CNRS, Laboratoire de Math\'ematiques d'Orsay, 91405, Orsay, France.}
\email{hoang-chinh.lu@universite-paris-saclay.fr}
\author{Trong-Thuc Phung}
\address{Ho Chi Minh City University of Technology, VNU-HCM, Vietnam}
\email{ptrongthuc@hcmut.edu.vn}
\author{T{\^a}t-Dat T\^o}
\address{\'Ecole Nationale de l'Aviation Civile\\
Unversit\'e  de Toulouse\\
 7, Avenue Edouard Belin\\
FR-31055 Toulouse Cedex 04, France}
\email{tat-dat.to@enac.fr}
\address{Current address of T\^at-Dat T\^o: Institut de Math\'ematiques de Jussieu-Paris Rive Gauche\\
Sorbonne Universit\'e - Campus Pierre et Marie Curie\\
4, place Jussieu \\
75252 Paris Cedex 05 France}
\email{tat-dat.to@imj-prg.fr}
\keywords{Hermitian manifold, Complex  Monge-Amp\`ere equation, Stability, Comparison principle}
\subjclass[2010]{32W20, 32U05, 32Q15.}
\begin{document}
\begin{abstract}
  Let $(X,\omega)$ be a compact Hermitian manifold. We establish a stability result for solutions to  complex Monge-Amp\`ere equations  with right-hand side in $L^p$, $p>1$.  Using this we prove that the solutions are H\"older continuous with the same exponent as in the K\"ahler case by Demailly-Dinew-Guedj-Ko{\l}odziej-Pham-Zeriahi. Our techniques also apply to the setting of big cohomology classes on compact K\"ahler manifolds.  
\end{abstract}

\maketitle

\tableofcontents

\section{Introduction}

One of the central problems in complex geometry is the existence of canonical metrics. On compact K\"ahler manifolds this question is intimately related to the study of complex Monge-Amp\`ere equations. Culminating with Yau's work \cite{Yau78}, which solves Calabi's conjecture, complex Monge-Amp\`ere equations have been studied and generalized in several directions with many important geometric applications. 

An essential step in solving complex Monge-Amp\`ere equations on compact manifolds is the uniform $L^{\infty}$ estimate. In Yau's work \cite{Yau78}, it was achieved via  Moser iteration process. Twenty years later,  Ko{\l}odziej \cite{Kol98} gave a novel proof using pluripotential theory which has been applied to many geometric situations. In the recent breakthrough of X.X. Chen and J. Cheng \cite{ChCh1,ChCh2,ChCh3}, pluripotential estimates of Ko{\l}odziej \cite{Kol98} and B{\l}ocki, see \cite{Bl05China,Bl11China}, were used to obtain a uniform estimate along the continuity path introduced earlier by X.X. Chen \cite{Chen_2015_AMQ}. 

In this paper we shall study complex Monge-Amp\`ere equations on compact (non-K\"ahler) Hermitian manifolds $(X,\omega)$ of dimension $n$, 
\begin{equation}
\label{eq: MA}
(\omega +dd^c u)^n = c f \omega^n,
\end{equation}
where $0\leq f\in L^p(X)$, for some $p>1$, and $c$ is a positive constant.  Here, if nothing is stated, the $L^p$-norm is computed with respect to the volume form $\omega^n$. Unlike  the K\"ahler case, here we have an extra variable: the constant $c$ which is in general not determined by $X,\omega$.  The non-degenerate case, i.e. when $0<f$ is smooth, has been studied by Cherrier \cite{Cher87}, Guan-Li \cite{GL10} under restrictive conditions. The general case was recently proved by Tosatti and Weinkove \cite{TW10}: there exists a unique constant $c=c_f >0$ and a unique (modulo adding a constant) smooth function $u$ with $\omega +dd^c u >0$, solving  \eqref{eq: MA}.  

In the last decade, pluripotential theory on compact Hermitian manifolds has been developed intensively  by S. Dinew, S. Ko{\l}odziej, and N-C. Nguy{\^e}n (see \cite{DK12ALM}, \cite{KN15Phong}, \cite{Ng16AIM}, \cite{Diw16AFST}).  The main difficulty in the Hermitian setting is that the comparison principle, which in the K\"ahler setting says that, for bounded $\omega$-psh functions $u,v$, 
\begin{equation}
	\label{eq: comparison principle}
	\int_{\{u<v\}} \omega_v^n \leq \int_{\{u<v\}} \omega_u^n,
\end{equation}
is missing. Nevertheless, a replacement for this, called the ``modified comparison principle'' was established in \cite{KN15Phong} which is a key tool in proving the existence of continuous solutions  \cite[Theorem 5.8]{KN15Phong}.  The  uniqueness of the constant $c$ was later proved in  \cite{Ng16AIM}. 

\medskip
Our first main result is a generalization of  \cite{GLZAIF} to the Hermitian setting. 
\begin{theorem}\label{thm: GLZ Hermitian}
Fix $0\leq  f,g \in L^p(X,\omega^n)$, $p>1$ such that $\int_X f \omega^n >0$ and $\int_X g \omega^n >0$. 
Assume that $u$ and $v$ are bounded $\omega$-psh functions on $X$ satisfying
$$
(\omega +dd^c u)^n = e^u f \omega^n \ \text{and} \ (\omega +dd^c v)^n = e^{v} g \omega^n. 
$$
Then for some constant $C>0$ depending on $X, \omega,n,p$, an upper bound  for $\|f\|_p, \|g\|_p$ and a positive lower bound for $\int_X f^{1/n} \omega^n$, $\int_X g^{1/n}\omega^n$, we have 
$$
\|u - v\|_{L^{\infty}(X)} \leq C \|f-g\|_p^{1/n}. 
$$ 
\end{theorem}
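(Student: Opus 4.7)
The plan is to adapt the Guedj--Lu--Zeriahi stability argument, developed in the K\"ahler setting, to the Hermitian framework by systematically replacing the classical comparison principle \eqref{eq: comparison principle} with the modified comparison principle of Kolodziej--Nguy\^en available on compact Hermitian manifolds.

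The first step is to establish uniform $L^\infty$ bounds on the two solutions. The $L^p$ estimates on Hermitian manifolds (Kolodziej--Nguy\^en, and their strengthenings from the earlier sections of this paper), together with the positive lower bounds on $\int_X f^{1/n}\omega^n$ and $\int_X g^{1/n}\omega^n$ (which prevent degeneration of the Monge--Amp\`ere mass of $e^u f\omega^n$ and $e^v g\omega^n$), imply $\|u\|_\infty + \|v\|_\infty \leq K$ for some $K$ depending only on the listed data. Consequently $e^u$ and $e^v$ are pinched between two positive constants, so both equations have uniformly controlled $L^p$ densities.

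Next, using solvability of the Hermitian Monge--Amp\`ere equation with $L^p$ data, I would construct an auxiliary bounded $\omega$-psh function $\rho$ with $\sup_X \rho = 0$ satisfying $(\omega + dd^c \rho)^n \geq c\,|f-g|\,\omega^n$ for an explicit $c > 0$ depending only on $\|f-g\|_p$, and---crucially---$\|\rho\|_\infty \leq C \|f-g\|_p^{1/n}$. The H\"older bound on $\|\rho\|_\infty$ is the quantitative volume--capacity estimate of Kolodziej adapted to the Hermitian setting.

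Now set $M := \sup_X(u-v)$; by the symmetry of the hypotheses we may assume $M \geq 0$ and aim to bound $M$. Introducing small parameters $\epsilon, \delta > 0$, I would form $w := (1-\epsilon)v + \epsilon \rho + M - \delta$ and work on the contact set $\{u < w\}$. Applying the modified comparison principle to $u$ and $w$ and using the mixed Monge--Amp\`ere inequality $(\omega + dd^c w)^n \geq (1-\epsilon)^n (\omega + dd^c v)^n + \epsilon^n (\omega+dd^c\rho)^n$ together with $(\omega+dd^c u)^n = e^u f\,\omega^n$ and $(\omega + dd^c v)^n = e^v g\,\omega^n$, one bounds $c\epsilon^n \int_{\{u<w\}}|f-g|\,\omega^n$ by an integral of $e^u f\,\omega^n$ over $\{u<w\}$, plus torsion-type error terms. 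H\"older's inequality on the right-hand side, combined with a Chebyshev-type estimate for the set $\{u<v+M-\delta\}$ (which contains $\{u<w\}$ once $\epsilon\|\rho\|_\infty$ is small relative to $M$), yields an inequality involving $M$, $\|f-g\|_p$, and $\epsilon$; optimizing over $\epsilon$ of order $M^{1-1/n}\|f-g\|_p^{-(n-1)/n^2}$ (or similar) delivers $M \leq C\|f-g\|_p^{1/n}$.

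The hard part will be controlling the torsion error terms produced by the modified comparison principle. Because $d\omega \neq 0$, integration by parts in the Hermitian setting generates lower-order boundary contributions involving $d\omega$ and $dd^c\omega^k$ which, if unchecked, would destroy the H\"older exponent $1/n$. Making these errors quantitatively subordinate to the main term---uniformly in the small parameters $\epsilon, \delta$, and using only the $L^\infty$ control from the first step---is where the quantitative refinement of Kolodziej--Nguy\^en's comparison principle established earlier in the paper should be decisive.
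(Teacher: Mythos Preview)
Your overall strategy---uniform $L^\infty$ bounds, an auxiliary potential built from $|f-g|$, and a convex-combination comparison---is in the right spirit, but it diverges from the paper's proof in a way that makes your life much harder than necessary, and the part you flag as ``the hard part'' is precisely what the paper avoids entirely.

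The paper does \emph{not} invoke the modified comparison principle on contact sets $\{u<w\}$ and then fight torsion errors. Instead it exploits the special structure of the equations: because the right-hand sides carry the factors $e^u$ and $e^v$, one has access to the sub/supersolution principle (Proposition~\ref{prop: sub and super solutions}), which says that if $(\omega+dd^c\phi)^n \geq e^{\lambda(\phi-v)}(\omega+dd^c v)^n$ then $\phi\leq v$. This is a clean pointwise conclusion with no error terms to manage. The auxiliary function $\rho$ in the paper is normalized by $\sup_X\rho=0$ and solves $(\omega+dd^c\rho)^n = c_h\bigl(|f-g|/\|f-g\|_p + 1\bigr)\omega^n$; it is \emph{uniformly} bounded (independent of $\|f-g\|_p$), not small of order $\|f-g\|_p^{1/n}$ as you propose. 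One then sets $\varepsilon \sim \|f-g\|_p^{1/n}$ and checks directly, via the mixed Monge--Amp\`ere inequality, that $\phi := (1-\varepsilon)u + \varepsilon\rho - K\varepsilon + n\log(1-\varepsilon)$ satisfies $(\omega+dd^c\phi)^n \geq e^{\phi}g\,\omega^n$. Proposition~\ref{prop: sub and super solutions} then gives $\phi\leq v$ in one stroke, hence $u-v\leq C\varepsilon$.

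Your plan to work on $\{u<w\}$ with the modified comparison principle, extract integral inequalities, and optimize over $\epsilon$ would force you to control the torsion contributions you mention, and the modified comparison principle is only valid for $s<\varepsilon^3/(16B)$, i.e.\ on very thin sublevel sets; turning that into a global bound on $M=\sup_X(u-v)$ with the sharp exponent $1/n$ is not at all straightforward. Your claim that $\|\rho\|_\infty\leq C\|f-g\|_p^{1/n}$ is also not the right normalization and does not follow from the Ko{\l}odziej estimate as stated. The paper's route sidesteps all of this: the exponential nonlinearity lets you trade the delicate comparison-on-sublevel-sets argument for a single application of the domination principle, and the smallness comes from the coefficient $\varepsilon$ in the convex combination, not from $\rho$ itself.
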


The proof of Theorem \ref{thm: GLZ Hermitian} goes along the same lines as in \cite{GLZAIF}.    An immediate consequence of  Theorem \ref{thm: GLZ Hermitian} is  a stability estimate for the constant $c$:

\begin{coro}\label{cor: stability of MA constant}
Assume that $0\leq f, g\in L^p(X)$ for some $p>1$. Then 
\[
|c_f -c_g|\leq C \|f-g\|_p^{1/n},
\]
where $C>0$ is a constant depending on $(X,\omega,n,p)$, an upper  bound for $\|f\|_p, \|g\|_p$ and a positive lower bound for $\|f\|_{1/n}$, $\|g\|_{1/n}$.
\end{coro}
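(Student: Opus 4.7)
The plan is to reduce Corollary \ref{cor: stability of MA constant} to Theorem \ref{thm: GLZ Hermitian} by shifting the Tosatti--Weinkove solutions so that they satisfy exponential-type equations. First I would let $u_f$ be the Tosatti--Weinkove solution of $(\omega+dd^c u_f)^n=c_f f\,\omega^n$, normalized by $\sup_X u_f=0$, and similarly $u_g$. From the Hermitian pluripotential $L^{\infty}$-estimates together with integration of the equation, I expect uniform bounds $\|u_f\|_{\infty},\|u_g\|_{\infty}\leq M$ and $0<c_0\leq c_f,c_g\leq c_1$, where $M,c_0,c_1$ depend only on the data listed in the statement.

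Next I would set $\phi_f:=u_f+\log c_f$ and $F_f:=f\,e^{-u_f}$ (and likewise $\phi_g,F_g$). A short computation gives
\[
(\omega+dd^c\phi_f)^n=c_f f\,\omega^n=e^{\phi_f}F_f\,\omega^n,
\]
with $\sup_X\phi_f=\log c_f$. The bounds $-M\leq u_f\leq 0$ yield $\|F_f\|_p\leq e^M\|f\|_p$ and $\int_X F_f^{1/n}\omega^n\geq\int_X f^{1/n}\omega^n$, and analogously for $F_g$, so the hypotheses of Theorem \ref{thm: GLZ Hermitian} are met by $(\phi_f,\phi_g)$ with sources $(F_f,F_g)$. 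The theorem then yields $\|\phi_f-\phi_g\|_{\infty}\leq C\|F_f-F_g\|_p^{1/n}$, and taking suprema gives $|\log c_f-\log c_g|\leq\|\phi_f-\phi_g\|_{\infty}$; combined with $c_f,c_g\in[c_0,c_1]$, this produces $|c_f-c_g|\leq c_1\|\phi_f-\phi_g\|_{\infty}$.

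The final step is to bound $\|F_f-F_g\|_p$ by $\|f-g\|_p$. The decomposition $F_f-F_g=(f-g)e^{-u_f}+g(e^{-u_f}-e^{-u_g})$ produces the estimate $\|F_f-F_g\|_p\leq C(\|f-g\|_p+\|u_f-u_g\|_{\infty})$. Here lies the main obstacle: a priori $\|u_f-u_g\|_{\infty}$ is comparable to $\|\phi_f-\phi_g\|_{\infty}$, so naively one obtains only a self-referential inequality of the form $\|\phi_f-\phi_g\|_\infty\leq C(\|f-g\|_p+\|\phi_f-\phi_g\|_\infty)^{1/n}$. To close the bootstrap, I would invoke the classical pluripotential stability for the Hermitian Monge--Amp\`ere equation $(\omega_u)^n=h\,\omega^n$ (Ko{\l}odziej--Nguy{\^e}n), applied with $h_f:=c_f f$ and $h_g:=c_g g$ viewed as sources with constant $1$; this gives $\|u_f-u_g\|_{\infty}\leq C\|c_f f-c_g g\|_p^{1/n}\leq C(\|f-g\|_p+|c_f-c_g|)^{1/n}$, and combining all the estimates with the two-sided bound on $c_f,c_g$ and a careful absorption argument then yields the desired $|c_f-c_g|\leq C\|f-g\|_p^{1/n}$.
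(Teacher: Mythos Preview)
Your reduction to Theorem \ref{thm: GLZ Hermitian} via the shifts $\phi_f=u_f+\log c_f$, $\phi_g=u_g+\log c_g$ is natural, but the closing step has a genuine gap. The Ko{\l}odziej--Nguy{\^e}n stability you invoke is stated under the strict positivity hypothesis $f\geq c_0>0$ (see \cite[Theorem A]{KN19} and Theorem \ref{thm: stability exponent} of the present paper), which is \emph{not} assumed in Corollary \ref{cor: stability of MA constant}. Even setting that aside, the absorption does not close: combining your inequalities yields, with $a=|c_f-c_g|$ and $b=\|f-g\|_p$, something of the shape $a\leq C\bigl(b+(b+a)^{1/n}\bigr)^{1/n}$, and for $b=0$ this still admits $a>0$ (e.g.\ $a=C^{n^2/(n^2-1)}$). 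Uniqueness of the MA constant does force $a=0$ when $b=0$, but the abstract inequality alone does not, so ``careful absorption'' is not enough here.

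The paper avoids the circularity by \emph{not} comparing $u_f$ with $u_g$. Instead it keeps $u=u_f$ and introduces the auxiliary solution $v$ of
\[
(\omega+dd^c v)^n=e^{v-u}\,c_f\,g\,\omega^n,
\]
so that $u$ and $v$ satisfy equations of the form $\omega_u^n=e^uF\omega^n$, $\omega_v^n=e^vG\omega^n$ with $F=e^{-u}c_f f$ and $G=e^{-u}c_f g$ sharing the \emph{same} weight $e^{-u}c_f$. Then $\|F-G\|_p\leq C\|f-g\|_p$ directly, with no self-reference, and Theorem \ref{thm: GLZ Hermitian} gives $|v-u|\leq C\|f-g\|_p^{1/n}$. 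Since $(\omega+dd^c v)^n$ is squeezed between $(1\pm C\|f-g\|_p^{1/n})c_f g\,\omega^n$, the elementary monotonicity of the MA constant (\cite[Lemma 2.1]{KN19}) yields $|c_f-c_g|\leq C\|f-g\|_p^{1/n}$. The point is that only one Tosatti--Weinkove solution is used; the second potential is manufactured to make the sources differ by a common multiplicative factor.
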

Using Theorem \ref{thm: GLZ Hermitian} we can greatly improve the stability exponent in  \cite[Theorem A]{KN19} :

\begin{theorem} \label{thm: stability exponent}
Assume that $u,v$ are $\omega$-psh continuous solutions to 
$$
(\omega+dd^c u)^n = f\omega^n \ ;  \ (\omega+dd^c v)^n =g\omega^n, \ \sup_X u= \sup_X v=0,
$$
where $f,g\in L^p(X), p>1$ and $f\geq c_0>0$. Then 
$$
\|u-v\|_{\infty} \leq C \|f-g\|_p^{1/n},
$$
where $C$ depends on $X,\omega, n, p, c_0$ and an upper bound for $\|f\|_p, \|g\|_p$, and a positive lower bound for $\|g\|_{1/n}$. 
\end{theorem}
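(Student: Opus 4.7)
The plan is to reduce Theorem \ref{thm: stability exponent} to the exponential stability statement Theorem \ref{thm: GLZ Hermitian} by rewriting both equations in exponential form. Setting $F := e^{-u}f$ and $G := e^{-v}g$, the functions $u,v$ are bounded $\omega$-psh solutions of
\[
(\omega + dd^c u)^n = e^u F\,\omega^n, \qquad (\omega + dd^c v)^n = e^v G\,\omega^n.
\]
A uniform $L^\infty$ bound $u,v \in [-M,0]$, where $M$ depends only on $(X,\omega,n,p)$ and an upper bound for $\|f\|_p,\|g\|_p$, follows from the Ko{\l}odziej--Nguy{\^e}n a priori estimate together with the normalization $\sup u = \sup v = 0$.

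I would then verify the hypotheses of Theorem \ref{thm: GLZ Hermitian} for $F,G$. The $L^p$-control $\|F\|_p \leq e^M\|f\|_p$ (and similarly for $G$) is immediate. For the lower bound, $\int_X F^{1/n}\omega^n \geq \int_X f^{1/n}\omega^n \geq c_0^{1/n}\int_X \omega^n > 0$ since $u \leq 0$ and $f \geq c_0$; the bound on $\int_X G^{1/n}\omega^n$ comes from $v \leq 0$ and the standing lower bound on $\|g\|_{1/n}$. Theorem \ref{thm: GLZ Hermitian} thus delivers
\[
\|u-v\|_\infty \leq C\,\|F-G\|_p^{1/n}
\]
with $C$ depending only on the allowed parameters.

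To finish, it remains to bound $\|F-G\|_p$ by $\|f-g\|_p$. Decomposing $F-G = e^{-u}(f-g) + (e^{-u}-e^{-v})g$ and using $|e^{-u}-e^{-v}| \leq e^M|u-v|$ yields
\[
\|F-G\|_p \leq e^M\|f-g\|_p + e^M\|g\|_p\,\|u-v\|_\infty.
\]
Inserting this back gives a self-referential inequality of the form $A^n \leq C_1 B + C_2 A$, where $A := \|u-v\|_\infty$ and $B := \|f-g\|_p$.

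The main technical obstacle is to close this inequality and extract $A \lesssim B^{1/n}$: by itself it only implies $A \leq \max\{(2C_1B)^{1/n},(2C_2)^{1/(n-1)}\}$, a bound that does not vanish with $B$ because the linear correction $C_2 A$ dominates $A^n$ in the small-$A$ regime. To extract the sharp exponent one would need an additional input; I would invoke either a qualitative continuity of the solution map (from uniqueness of the Tosatti--Weinkove solution together with the uniform $L^\infty$ estimate) or, more quantitatively, the weaker H\"older stability of \cite[Theorem A]{KN19}, in order to force $A$ into the regime where $A^n$ dominates $C_2 A$. The inequality then collapses to $A^n \leq 2C_1 B$, producing the desired estimate $\|u-v\|_\infty \leq C\|f-g\|_p^{1/n}$.
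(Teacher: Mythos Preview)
Your reduction has a genuine gap that the proposed fixes do not close. The self-referential inequality $A^n \leq C_1 B + C_2 A$ carries no information in the small-$A$ regime: for $A < (C_2)^{1/(n-1)}$ the left-hand side is already dominated by $C_2 A$, so the inequality is vacuous. The regime in which ``$A^n$ dominates $C_2 A$'' is the \emph{large}-$A$ regime, precisely the opposite of what you need. Invoking the weaker stability of \cite[Theorem A]{KN19} as a bootstrap input also goes the wrong way: if you already know $A \leq C_3 B^{\alpha_0}$ and feed this into $\|F-G\|_p \leq e^M B + e^M\|g\|_p A$, you get $\|F-G\|_p \lesssim B^{\alpha_0}$ and hence $A \lesssim B^{\alpha_0/n}$, a \emph{worse} exponent. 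Iterating only divides the exponent by $n$ at each step. The circularity is structural: your choice $F=e^{-u}f$, $G=e^{-v}g$ mixes the two unknowns into the very quantity you feed to Theorem~\ref{thm: GLZ Hermitian}.

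The paper avoids this by decoupling the two comparisons. It introduces an auxiliary smooth $w$ solving $(\omega+dd^c w)^n = e^{w-v} f\,\omega^n$ and applies Theorem~\ref{thm: GLZ Hermitian} to the pair $(w,v)$ with weights $F=e^{-v}f$, $G=e^{-v}g$ sharing the \emph{same} exponential factor; then $\|F-G\|_p = \|e^{-v}(f-g)\|_p \leq e^M\|f-g\|_p$ with no self-reference, giving $|w-v|\leq C\|f-g\|_p^{1/n}=:\varepsilon$ cleanly. The remaining comparison $|w-u|$ is between two solutions whose densities $h=e^{w-v}f$ and $f$ satisfy $e^{-\varepsilon}f \leq h \leq e^{\varepsilon}f$; for this multiplicatively-close situation the paper proves $|u-w-\sup_X w|\leq C\varepsilon$ by the Ko{\l}odziej--Nguy{\^e}n collar/Laplacian-mass argument (Lemma~\ref{lem: KN15 Lemma 3.4} and \cite[Proposition~3.8]{KN19}), and it is precisely here that the hypothesis $f\geq c_0>0$ is used. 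Your attempt bypasses this second step entirely, which is why it cannot recover the sharp exponent.
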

Compared to \cite{KN19}   the exponent is improved to be the same as in the K\"ahler case \cite{DZ10AIM}, but we still assume that $f\geq c_0>0$ for some positive constant $c_0$. It is interesting to know whether our techniques can be applied to treat more general right-hand sides considered in \cite{KN20}.   Improving the stability exponent is an interesting question because, at least, the stability estimate can be used to prove the H\"older continuity of solutions.  Moreover, in the recent breakthrough of Chen-Donaldson-Sun \cite{Chen_Donaldson_Sun_2015_JAMS} the H\"older continuity of solutions to degenerate complex Monge-Amp\`ere equations was exploited. 

If the comparison principle \eqref{eq: comparison principle} holds on  $(X,\omega)$ then many arguments from the K\"ahler case can be employed. In particular,  Theorem \ref{thm: stability exponent} holds without the strict positivity condition. Our argument also applies to the more general case of  big cohomology classes, improving a stability result of Guedj-Zeriahi \cite{GZ12MRL}: 

\begin{theorem}\label{thm: stability GZ}
	Let $(X,\omega)$ be a compact K\"ahler manifold of dimension $n$. Fix a closed smooth real $(1,1)$-form $\theta$ whose  cohomology class $\{\theta\}$ is big. Assume that $0\leq f,g \in L^p(X,\omega^n)$ are such that $\int_X f\omega^n = \int_X g \omega^n = {\rm Vol}(\theta)=1$.  If $u$ and $v$ are $\theta$-psh functions with minimal singularities on $X$ such that 
	$$
	\theta_u^n = f\omega^n, \ \theta_v^n = g\omega^n, \ \sup_X u= \sup_X v=0,
	$$
	then, for some constant $C>0$ depending on $(X,\omega,n, \theta, p)$ and an upper bound for $\|f\|_p, \|g\|_p$, we have 
	$$
	\sup_X |u-v|\leq C \|f-g\|_p^{1/n}.
	$$
\end{theorem}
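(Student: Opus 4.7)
The strategy is to adapt the proof of Theorem \ref{thm: GLZ Hermitian} to the big-cohomology setting on a compact K\"ahler manifold, taking advantage of the fact that the classical comparison principle \eqref{eq: comparison principle} is now available and extends, via the work of Boucksom-Eyssidieux-Guedj-Zeriahi (BEGZ), to $\theta$-psh functions with minimal singularities in a big class (using the non-pluripolar Monge-Amp\`ere operator). The argument proceeds in two steps: first a stability result for exponential-type Monge-Amp\`ere equations in the big class, then a reduction from the standard equation to the exponential one.

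\emph{Step 1: Exponential MA stability in a big class.} I first prove the big-class analog of Theorem \ref{thm: GLZ Hermitian}: if $\phi,\psi$ are $\theta$-psh with minimal singularities satisfying
\[
\theta_\phi^n = e^\phi f\omega^n, \qquad \theta_\psi^n = e^\psi g\omega^n,
\]
with uniform bounds on $\|f\|_p,\|g\|_p$ and mass, then $\|\phi-\psi\|_\infty \leq C\|f-g\|_p^{1/n}$. The proof follows \cite{GLZAIF} mutatis mutandis. The a priori estimate $\|\phi-V_\theta\|_\infty \leq M$, where $V_\theta := \sup\{\rho \in \PSH(X,\theta) : \rho \leq 0\}$ is the minimal-singularity envelope, is provided by the BEGZ extension of Ko{\l}odziej's pluripotential estimates. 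Applying the classical comparison principle on sublevel sets $\{\psi < \phi - t\}$ and exploiting the strict monotonicity of the exponential factor $e^\phi$, one extracts, after a standard iteration in $t > 0$, the claimed bound.

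\emph{Step 2: From exponential to standard MA.} To derive stability for the standard equations, I introduce for each small $\varepsilon > 0$ the auxiliary solutions $u_\varepsilon^f$ of
\[
\theta_{u_\varepsilon^f}^n = c_\varepsilon(f)\, e^{\varepsilon u_\varepsilon^f} f\omega^n, \qquad \sup_X u_\varepsilon^f = 0,
\]
where $c_\varepsilon(f) > 0$ is the unique constant ensuring solvability (existence and uniqueness follow from BEGZ, analogous to Tosatti-Weinkove in the K\"ahler case). As $\varepsilon \to 0^+$, one has $c_\varepsilon(f) \to 1$ and $u_\varepsilon^f \to u$ uniformly on the ample locus of $\{\theta\}$. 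After the rescaling $\tilde u_\varepsilon^f := \varepsilon u_\varepsilon^f$, the equation becomes of the form treated in Step 1, yielding
\[
\|u_\varepsilon^f - u_\varepsilon^g\|_\infty \leq C_\varepsilon \|f - g\|_p^{1/n}.
\]
Careful tracking of the constants shows that $C_\varepsilon$ remains uniformly bounded as $\varepsilon \to 0^+$, and letting $\varepsilon \to 0$ concludes the proof.

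\emph{Main obstacle.} The principal difficulty lies in making Step 2 quantitatively uniform in $\varepsilon$. This requires (i) a uniform Skoda-type integrability bound $\int_X e^{-\alpha\phi}\omega^n \leq C$ for $\phi \in \PSH(X,\theta)$ with minimal singularities and $\sup_X \phi = 0$, and (ii) uniform $L^\infty$ control of the normalizing constant $c_\varepsilon(f)$ and of $\|u_\varepsilon^f - V_\theta\|_\infty$ along the family. Both are accessible through the BEGZ non-pluripolar framework combined with Ko{\l}odziej-type capacity estimates, but their combination is the delicate quantitative point that allows the improvement from the exponent of \cite{GZ12MRL} to the optimal $1/n$.
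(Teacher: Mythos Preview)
Your Step 2 contains a genuine gap: the constant $C_\varepsilon$ does not stay bounded as $\varepsilon\to 0$. First, the rescaling $\tilde u_\varepsilon^f := \varepsilon u_\varepsilon^f$ does not put the equation into the form of Step~1: the Monge--Amp\`ere operator still acts on $u_\varepsilon^f$, and $(\theta+dd^c u_\varepsilon^f)^n \neq (\theta+dd^c \tilde u_\varepsilon^f)^n$. One can instead view $\tilde u_\varepsilon^f$ as $(\varepsilon\theta)$-psh, but then the volume of the class is $\varepsilon^n$ and the auxiliary equation needed in the perturbation is no longer solvable with a nonnegative density for small $\varepsilon$. More to the point, if one runs the GLZ perturbation directly on $\theta_\phi^n = e^{\lambda\phi} F\omega^n$ with $\lambda=\varepsilon$, the test function $(1-\delta)\phi+\delta\rho-K\delta$ must satisfy $\lambda\bigl((1-\delta)\phi+\delta\rho-K\delta\bigr)\le \lambda\phi+n\log(1-\delta)$, which forces $K\ge \sup(\rho-\phi)+n/\lambda$; the resulting stability bound then carries the factor $n/\lambda$, so $C_\varepsilon\sim 1/\varepsilon\to\infty$ and the limit $\varepsilon\to 0$ yields nothing.

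The paper avoids any limit in $\varepsilon$. It fixes once and for all a small $a>0$ (coming from Skoda integrability) and solves $\theta_w^n=e^{a(w-v)}f\omega^n$. The bound $|w-v|\le A_1\|f-g\|_p^{1/n}$ follows from the GLZ perturbation essentially as in your Step~1, applied with the \emph{fixed} exponent $a$. The crucial second half, $|w-u|\le A_2\|f-g\|_p^{1/n}$, is obtained by a \emph{different} argument: one observes that the densities $h:=e^{a(w-v)}f$ and $f$ now satisfy $2^{-\delta}h\le f\le 2^{\delta}h$ with $\delta=C\|f-g\|_p^{1/n}$, and then applies the comparison principle on the sublevel set $\{w<(1-\delta)u+\delta\rho-C\delta\}$ together with the domination principle (Ko{\l}odziej's technique, as in \cite{Kolodziej_2003_IUMJ}) to conclude. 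This reduction to multiplicatively close densities is precisely the mechanism your limiting procedure was meant to replace but cannot.
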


Compared to \cite[Theorem C]{GZ12MRL}, here we have improved the exponent from $\frac{1}{2^n(n+1)-1}$ to $\frac{1}{n}$. Note that one can replace the $L^p$ norm by the $L^1$ norm and the exponent becomes slightly smaller (see \cite[Remark 2.2]{GLZAIF}).   The proof of Theorem \ref{thm: stability GZ} uses \cite[Theorem A]{GLZAIF} and Ko{\l}odziej's techniques as in \cite[Theorem 4.1]{Kolodziej_2003_IUMJ}. The main point here is that using \cite[Theorem A]{GLZAIF} we reduce the problem to the case in which the two densities $f,g$ are very close to each other in the following sense: $e^{-\varepsilon}f \leq g \leq e^{\varepsilon} f$, for some small constant $\varepsilon>0$.  In case $\theta$ is additionally semipositive  we get the same exponent as in  \cite{DZ10AIM}.  Our arguments also apply to the setting of prescribed singularities, where instead of asking for $u,v$ to have minimal singularities we ask $u,v$ to have the same singularity type as a given model potential \cite{DDL2,DDL4,DDL5}. 

\medskip 

A classical use of such  stability estimates is in proving H\"older continuity of solutions. Given $0\leq f\in L^p$ and $u$ a continuous solution to $\omega_u^n = f\omega^n$, it was proved by S. Ko{\l}odziej and N.C. Nguy{\^e}n \cite[Theorem B]{KN19} that if $f\geq c_0>0$ then $u$ is H\"older continuous. The strict positivity assumption was relaxed by the same authors recently in \cite{KN18AIF}, but the exponent is not optimal. Also,  due to the lack of uniqueness, the result in \cite{KN18AIF} does not give that all solutions are H\"older continuous. In the K\"ahler case, the H\"older continuity was first proved by Ko{\l}odziej \cite{Kol08MA} and improved by Demailly-Dinew-Guedj-Ko{\l}odziej-Pham-Zeriahi \cite{DDGKPZ14} using Demailly's approximation theorem \cite{Dem94}.  Related questions on H\"older continuity of solutions to complex Monge-Amp\`ere equations on compact K\"ahler manifolds have been studied by many authors. T.C. Dinh and V.A. Nguy{\^e}n \cite{DN14JFA} proved that a probability measure $\mu$ admits a H\"older continuous solution $\varphi_{\mu}$, i.e. $\varphi_{\mu}$ is $\omega$-psh and $(\omega +dd^c \varphi_{\mu})^n = \mu$, if and only if the super-potential associated to $\mu$ is H\"older continuous. The notion of super-potentials was introduced by T.C. Dinh and N. Sibony \cite{DS10JAG}. Using this notion, D.V. Vu has established in \cite{Vu18PAMS} a H\"older stability of families of Monge-Amp\`ere measures of H\"older continuous potentials.  He has also studied in \cite{Vu18MA}  H\"older continuity of potentials of probability measures supported in  real $\mathcal{C}^3$ submanifolds.   The study of H\"older continuity of solutions to complex Monge-Amp\`ere equations has many important applications in complex dynamics, we refer the reader to \cite{DNS10JDG} for more details. 

\medskip
  Using Theorem \ref{thm: GLZ Hermitian} we prove that any bounded solution to \eqref{eq: MA} is H\"older continuous with exponent in $(0, p_n)$. The constant $p_n$ here is the same as the one obtained in the K\"ahler case in \cite{DDGKPZ14}.

\begin{theorem}\label{thm: Holder exponent}
Let $(X,\omega)$ be a compact Hermitian manifold of dimension $n$. Fix  $0\leq f \in L^p(X), p>1$ with $\int_X f \omega^n >0$. Then any  solution $u$ to $\omega_u^n =c_f f \omega^n$ is H\"older continuous with H\"older exponent in $(0, p_n)$, where $p_n= \frac{2}{nq+1}$.
\end{theorem}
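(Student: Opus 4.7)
The plan is to adapt the strategy of Demailly--Dinew--Guedj--Ko{\l}odziej--Pham--Zeriahi \cite{DDGKPZ14} to the Hermitian setting, feeding in the new stability estimate of Theorem \ref{thm: stability exponent}. Since $f \in L^p$ with $p>1$, earlier work of Ko{\l}odziej--Nguy{\^e}n ensures that any bounded $\omega$-psh solution $u$ is already continuous on $X$, so the issue is purely quantitative: one needs an explicit modulus-of-continuity bound.

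For $\delta \in (0,\delta_0)$ small, I would construct a Demailly-type regularization $u_\delta$ by convolving $u$ with a standard mollifier in a finite atlas of coordinate charts and gluing the pieces via the regularized maximum. On the Hermitian manifold $(X,\omega)$ this produces a smooth family with $u_\delta \downarrow u$, a quasi-psh bound $\omega + dd^c u_\delta \ge -A\delta\,\omega$ where $A = A(X,\omega)$ depends on the curvature and torsion, and the local comparison $u_\delta(x) \le \sup_{B(x,\delta)} u + O(\delta^2)$. Rescaling to the genuinely $\omega$-psh function $\hat u_\delta := (u_\delta - C_\delta)/(1+A\delta)$, with $C_\delta$ chosen so that $\sup_X \hat u_\delta = 0$, I would then expand
$$
(\omega + dd^c \hat u_\delta)^n = g_\delta\,\omega^n,
$$
with $g_\delta \geq 0$ satisfying $\|g_\delta - f\|_p \lesssim \delta^{\beta}$ for an explicit $\beta > 0$, the $L^p$-control of $f$ entering at this point.

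Applying Theorem \ref{thm: stability exponent} to the pair $(u, \hat u_\delta)$, together with Corollary \ref{cor: stability of MA constant} to track the shift in the Monge--Amp\`ere normalizing constant, yields $\|u - \hat u_\delta\|_\infty \le C \delta^{\beta/n}$. Combined with the pointwise quadratic bound on $u_\delta$, one obtains
$$
\sup_{y \in B(x,\delta)} u(y) - u(x) \le C'\,\delta^{\min(\beta/n,\,2)},
$$
and the standard DDGKPZ balancing between the regularization scale and the $L^p$-stability exponent then recovers the sharp H\"older exponent $p_n = 2/(nq+1)$, where $q=p/(p-1)$ is the conjugate of $p$. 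Since the argument applies at every scale $\delta$, this shows $u$ is H\"older continuous with any exponent in $(0,p_n)$.

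The main obstacle is the estimate $\|g_\delta - f\|_p \lesssim \delta^{\beta}$ in the second step. In the K\"ahler case this hinges on $d\omega = 0$, which makes the integration by parts comparing $(\omega + dd^c u_\delta)^n$ to $(\omega + dd^c u)^n$ essentially clean. On a Hermitian manifold $d\omega \neq 0$, so each integration by parts produces a torsion error that one must control in $L^p$ rather than merely $L^\infty$; this is done using the modified comparison principle of Ko{\l}odziej--Nguy{\^e}n \cite{KN15Phong} together with the Hermitian capacity estimates developed there. A secondary issue is that Theorem \ref{thm: stability exponent} assumes $f \ge c_0 > 0$; this is bypassed by applying Theorem \ref{thm: GLZ Hermitian} to a suitable exponential perturbation, exactly as in \cite{GLZAIF}, so that no positive lower bound on the density is needed to close the argument.
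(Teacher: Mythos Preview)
Your proposal has a genuine gap at its core. The claim that the Monge--Amp\`ere density $g_\delta$ of the regularization $\hat u_\delta$ satisfies $\|g_\delta - f\|_p \lesssim \delta^\beta$ is not justified, and this is not how the argument proceeds even in the K\"ahler case. Regularizing $u$ gives no $L^p$ control whatsoever on $(\omega+dd^c\hat u_\delta)^n$: the operator is far too nonlinear for that, and neither the modified comparison principle nor capacity estimates produce such a bound. What integration by parts \emph{does} give is the integral estimate $\int_X(\rho_\delta u - u)\,\omega^n \le C\delta^2$, which via Chebyshev and H\"older shows that the ``bad set'' $E(\delta)=\{\rho_\delta u - u > Ab\,\delta^\alpha\}$ carries small $f$-mass, of order $\delta^{(2-\alpha)/q}$.

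The stability theorem is then applied not between $u$ and its regularization, but between $u$ and an auxiliary function $v$ solving $(\omega+dd^c v)^n = e^{v-u}f\,{\bf 1}_{X\setminus E(\delta)}\,\omega^n$. Here one uses Theorem~\ref{thm: GLZ Hermitian} directly (not Theorem~\ref{thm: stability exponent}, so the hypothesis $f\ge c_0>0$ never enters) to get $|u-v|\le C\delta^{\beta/(n+\varepsilon)}$ with $\beta=(2-\alpha)/q$. One also needs the Kiselman--Legendre transform of $\rho_t u$, with parameter $c=\delta^\alpha$, to obtain an $\omega$-psh function $u_\delta$ with $\omega+dd^c u_\delta\ge\gamma\omega>0$; your naive rescaling only yields $\omega+dd^c u_\delta\ge -A\delta\,\omega$, which both loses a full power of $\delta$ and fails to give strict positivity. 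The proof concludes via the minimum principle (Proposition~\ref{prop: min principle}): on $E(\delta)$ one has $\omega_v^n=0<\omega_{u_\delta}^n$, so $u_\delta-v$ cannot attain its maximum there, which forces the set $\{\rho_{s\delta}u \ge u + 5Ab\,\delta^\alpha\}$ to be empty for $\delta$ small, and hence $\rho_\delta u - u \le C\delta^\alpha$.
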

Here, $q$ is the conjugate of $p$, i.e. $1/p + 1/q =1$. The proof strictly follows \cite{KN19} and \cite{DDGKPZ14} in which the stability estimate  is used.  The only difference is that we use Theorem \ref{thm: GLZ Hermitian} to construct the perturbation functions, allowing to avoid the technical assumption $f\geq c_0>0$. Interestingly, our method also increases the H\"older exponent by a factor $n$ compared to \cite[Theorem B]{KN19}.  

In the last part of the paper we adapt the techniques of \cite{BD12} to establish H\"older regularity of plurisubharmonic envelopes, see Theorem \ref{thm: Holder envelope}.

 \medskip
 
\noindent {\bf Organization of the paper.}
 	In Section \ref{sect: backgrounds} we collect several known tools in pluripotential theory on compact Hermitian manifolds. The stability results will be proved in Section \ref{sect: stability}, while Theorem  \ref{thm: Holder exponent} will be proved in Section \ref{sect: Holder continuity}. 

\medskip

\noindent {\bf Acknowledgements. } We thank V\v{a}n-D\^ong Nguy{\^e}n for reading the first version of this paper and giving many useful comments. We are indebted to Ahmed Zeriahi for his very important help concerning Lemma \ref{lem: GKZ08}. We thank Vincent Guedj and the referee for many useful suggestions  which helped to improve the presentation of the paper.  We thank Ngoc-Cuong Nguy\^en for pointing out an error in the proof of Theorem \ref{thm: improvement of GZ12} in a previous version of the paper.  

C.H.Lu is supported by the CNRS project PEPS ``Jeune chercheuse, jeune chercheur''.   T.T. Phung is supported by Ho Chi Minh City University of Technology under grant number T-KHUD-2020-32. T.D. T\^o is  partially supported by the IEA project PLUTOCHE.

\section{Backgrounds}\label{sect: backgrounds}
Fix $(X,\omega)$ a compact Hermitian manifold of dimension $n$. 
In this section we review some background material in pluripotential theory on compact Hermitian manifolds. 
For a detailed treatment  we refer the reader to \cite{DK12ALM}, \cite[Section 1]{KN15Phong} and the recent surveys \cite{Diw16AFST}, \cite{Kol17IJM}.

A function $u : X \rightarrow \mathbb{R} \cup \{-\infty\}$ is quasi plurisubharmonic if locally it is the sum of a smooth and a psh function. We say that $u$ is $\omega$-psh if $u$ is quasi-psh and $\omega +dd^c u \geq 0$ in the sense of currents. Here, $d= \partial + \bar{\partial}$ and $d^c = i(\bar{\partial} -\partial)$ are real differential operators so that $dd^c = 2i \partial \bar{\partial}$.  We let $\PSH(X,\omega)$ denote the set of all $\omega$-psh functions on $X$ which are not identically $-\infty$. It follows from Demailly's approximation theorem  \cite{Dem92} that any $\omega$-psh function can be approximated from above by smooth strictly $\omega$-psh functions.

For a bounded $\omega$-psh function $u$, the complex Monge-Amp\`ere operator $\omega_u^n$ is defined by the method of Bedford and Taylor \cite{BT76}.  It was proved in \cite[Remark 5.7]{KN15Phong} that $\int_X \omega_u^n >0$, if $u$ is bounded. 

The main difficulty in the Hermitian setting is that the total mass of the Monge-Amp\`ere measure $\omega_u^n$ depends on the function $u$. This is why the comparison principle does not hold in general. It was proved in \cite{KN15Phong} that the following replacement for the comparison principle holds. 

\begin{theorem}[Modified comparison principle]\cite[Theorem 2.3]{KN15Phong}
	Let $u,v \in \PSH(X,\omega)\cap L^{\infty}(X)$.  Fix $0<\varepsilon <1$ and set 
	\[
	m_{\varepsilon}:= \inf_X (u-(1-\varepsilon)v).
	\]
	Then for all $0<s<\frac{\varepsilon^3}{16B}$, 
	$$
	\int_{\{u<(1-\varepsilon)v +m_{\varepsilon}+s\}} \omega_{(1-\varepsilon)v}^n \leq \left ( 1+ \frac{Cs}{\varepsilon^n}\right ) \int_{\{u<(1-\varepsilon)v+m_{\varepsilon}+s\}} \omega_{u}^n,
	$$
	where $C>0$ is a constant depending on $n,B$.
\end{theorem}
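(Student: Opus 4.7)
The plan is to exploit the strict positivity of $\omega_\psi$ with $\psi := (1-\varepsilon)v + m_\varepsilon$, which satisfies $\omega_\psi = \varepsilon\omega + (1-\varepsilon)\omega_v \geq \varepsilon\omega$, while by the definition of $m_\varepsilon$ one has $u\geq \psi$ globally with equality attained. On the open set $U(s):=\{u < \psi+s\}$ we want to compare $\int\omega_\psi^n$ with $\int\omega_u^n$. The algebraic backbone is the telescoping identity
\begin{equation*}
\omega_\psi^n - \omega_u^n \;=\; dd^c(\psi - u) \wedge \sum_{k=0}^{n-1}\omega_\psi^k \wedge \omega_u^{n-1-k}.
\end{equation*}
In the K\"ahler case Stokes' theorem on $U(s)$---where $\psi-u\geq -s$ with equality on $\partial U(s)$---makes the integral of the right-hand side vanish; the point of the Hermitian argument is that the non-closedness of $\omega$ produces errors which the positivity gap $\omega_\psi-\varepsilon\omega\geq 0$ is designed to absorb.

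My first step is to pass to smooth strictly $\omega$-psh $u,v$ by Demailly's regularization, establish the estimate there, and recover the full statement via Bedford-Taylor continuity of mixed Monge-Amp\`ere currents along decreasing sequences. For smooth $u,v$, I would multiply the identity above by a cutoff $\chi_j$ increasing to $\mathbf{1}_{U(s)}$ from inside and integrate by parts twice. This produces three families of terms: a bulk term controlled by $\int (\psi-u)\,dd^c\omega\wedge(\cdot)$, a gradient term of the shape $\int d(\psi-u)\wedge d^c\omega \wedge(\cdot)$ coming from the fact that $\omega_\psi^k\wedge\omega_u^{n-1-k}$ is not closed, and a boundary term on $\partial U(s)$ where $\psi-u=-s$.

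To control the errors I would use two ingredients. First, since $|\psi-u|\leq s$ on $U(s)$, the bulk and boundary terms are $O(s)$ times total masses of currents of the form $\omega^\alpha\wedge\omega_\psi^\beta\wedge\omega_u^\gamma$ with $\alpha\geq 1$, and these masses are bounded via Chern-Levine-Nirenberg by $\|u\|_\infty,\|v\|_\infty$ and the geometric constant $B$. Second, the gradient term is the dangerous one: it is not an integral of a positive current, so I would dominate it through a Cauchy-Schwarz inequality of the schematic form
\begin{equation*}
\Bigl|\int d(\psi-u)\wedge d^c\omega \wedge T\Bigr|^2 \;\leq\; B^2 \int dd^c(\psi-u)\wedge \omega \wedge T' \cdot \int (\psi-u)^2\, dd^c\omega\wedge T'',
\end{equation*}
and then use $\omega_\psi\geq\varepsilon\omega$ to rewrite the factor $\omega\wedge T'$ as at most $\varepsilon^{-1}\omega_\psi\wedge T'$. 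Iterating this trick $n$ times produces the factor $\varepsilon^{-n}$, and the cubic smallness condition $s<\varepsilon^3/(16B)$ is exactly the threshold at which the absorption closes up, yielding the multiplicative constant $1+Cs/\varepsilon^n$.

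The main obstacle will be handling the gradient error: in the K\"ahler case it is absent because $d\omega=0$, while here it is neither a positive integrand nor a boundary term, and requires the delicate two-level Cauchy-Schwarz absorption above together with the uniform positivity of $\omega_\psi$; this is also the step that forces both the factor $\varepsilon^{-n}$ and the cubic threshold on $s$. A secondary technical point is justifying Stokes on the rough set $U(s)$ when $u,v$ are only bounded, which is dealt with by choosing $\chi_j$ as a smooth mollification of $\max(0,\psi+s-u)^{1/j}$ and passing to the limit $j\to\infty$.
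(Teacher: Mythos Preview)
This theorem is not proved in the paper under review; it is quoted verbatim as background from \cite[Theorem 2.3]{KN15Phong} and used as a black box (for the domination principle and its consequences). There is therefore no ``paper's own proof'' to compare against here.

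On the merits of your sketch: the overall mechanism you describe --- the telescoping identity, integration by parts on $U(s)$, and absorption of the torsion errors via $\omega\leq \varepsilon^{-1}\omega_\psi$ --- is the correct one and matches the Ko{\l}odziej--Nguy{\^e}n argument. One point, however, is off: you write that the bulk and boundary errors are ``bounded via Chern--Levine--Nirenberg by $\|u\|_\infty,\|v\|_\infty$''. That would make the constant $C$ depend on these norms, contradicting the statement (where $C=C(n,B)$ only). The actual argument is self-referential: the error integrals over $U(s)$ of the form $\int_{U(s)}\omega^\alpha\wedge\omega_\psi^\beta\wedge\omega_u^\gamma$ are not bounded by global CLN masses but are instead fed back, after replacing each factor of $\omega$ by $\varepsilon^{-1}\omega_\psi$, into the very quantities $\int_{U(s)}\omega_\psi^n$ and $\int_{U(s)}\omega_u^n$ being compared. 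This closed-loop absorption is what produces a constant independent of $u,v$ and what forces the threshold $s<\varepsilon^3/(16B)$. Your Cauchy--Schwarz treatment of the gradient term is also somewhat schematic; in practice one first converts $d(\psi-u)\wedge d^c\omega\wedge T$ into $dd^c$-terms by a further integration by parts (producing $dd^c\omega$ and $d\omega\wedge d^c\omega$ contributions, both controlled by $B$), rather than estimating it directly.
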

The constant $B$ depends only on $(X,\omega,n)$, it is chosen so that
$$
\begin{cases}
-B \omega^2 \leq 2n dd^c \omega \leq B \omega^2 \\
-B \omega^3 \leq 4n^2 d\omega \wedge d^c \omega \leq B \omega^3
\end{cases}.
$$
Note that the modified comparison principle is only valid on very small sublevel sets. This local analysis is suitable for proving the domination principle. The proof of this  result is (implicitly) written in  \cite[Lemma 2.3]{Ng16AIM}. In the K\"ahler case, the domination principle was proved by Dinew (see \cite[Proposition A1]{BL12PA})  using his uniqueness result \cite{DiwJFA09} (see \cite[Proposition 2.21]{Dar18S}, and \cite{LN19} for a different  proof using the envelope technique). 

\begin{prop}
	If $u,v$ are bounded $\omega$-psh functions such that $\omega_u^n(u<v) =0$ then $u\geq v$. 
\end{prop}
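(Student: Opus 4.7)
The plan is to argue by contradiction, using the modified comparison principle (stated just above) as the main tool. Suppose $u \not\geq v$ on $X$, so $M := \sup_X(v-u) > 0$. Fix $\varepsilon \in (0,1)$ small, to be chosen, and set
$$
m_\varepsilon := \inf_X \bigl(u - (1-\varepsilon)v\bigr).
$$
First I would observe, by rewriting $u - (1-\varepsilon)v = (u-v) + \varepsilon v$, that $m_\varepsilon \leq -M + \varepsilon \sup_X v$, so $m_\varepsilon \leq -M/2 < 0$ for $\varepsilon$ sufficiently small.

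Next, for $s \in (0, \varepsilon^3/(16B))$ to be chosen, I would introduce the sublevel set
$$
E_{\varepsilon,s} := \{u < (1-\varepsilon)v + m_\varepsilon + s\}.
$$
On $E_{\varepsilon,s}$ one has $u - v < -\varepsilon v + m_\varepsilon + s$, and using the boundedness of $v$ together with $m_\varepsilon \leq -M/2$, this upper bound becomes strictly negative for $\varepsilon$ and $s$ small enough. Hence $E_{\varepsilon,s} \subset \{u < v\}$, and the hypothesis yields $\omega_u^n(E_{\varepsilon,s}) \leq \omega_u^n(\{u<v\}) = 0$. Applying the modified comparison principle combined with the pointwise bound
$$
\omega_{(1-\varepsilon)v}^n = \bigl(\varepsilon\omega + (1-\varepsilon)\omega_v\bigr)^n \geq \varepsilon^n \omega^n,
$$
we deduce $\omega^n(E_{\varepsilon,s}) = 0$.

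The main obstacle is turning this into a contradiction, i.e., producing the positivity $\omega^n(E_{\varepsilon,s}) > 0$. Since $u - (1-\varepsilon)v$ is a difference of two upper semicontinuous functions, its sublevel sets near the infimum $m_\varepsilon$ need not be open and need not have positive Lebesgue measure a priori. I would resolve this by a regularization argument: using Demailly's approximation theorem \cite{Dem92} (recalled at the start of the background section), approximate $u$ and $v$ from above by decreasing sequences of smooth $\omega$-psh functions, for which the analogous sublevel sets are open and hence of positive $\omega^n$-measure. Running the above argument in the smooth setting yields a contradiction, and weak continuity of the Monge-Amp\`ere operator along decreasing sequences of bounded $\omega$-psh functions then transfers the conclusion back to the original $u,v$.
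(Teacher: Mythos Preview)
Your argument coincides with the paper's proof up to the line $\omega^n(E_{\varepsilon,s})=0$. The ``obstacle'' you then raise is not real, and the regularization workaround you propose does not work.

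Once you have $\omega^n(E_{\varepsilon,s})=0$, you are done. Indeed this says $u\geq (1-\varepsilon)v+m_\varepsilon+s$ almost everywhere on $X$. Both sides of this inequality are quasi-psh functions (the right-hand side is $(1-\varepsilon)\omega$-psh, hence $\omega$-psh). For any quasi-psh function $\phi$ and any $x\in X$ one has $\phi(x)=\lim_{r\to 0^+}\fint_{B(x,r)}\phi\,dV$, so an inequality between two quasi-psh functions that holds almost everywhere holds everywhere. Hence $u-(1-\varepsilon)v\geq m_\varepsilon+s$ on all of $X$, contradicting $m_\varepsilon=\inf_X(u-(1-\varepsilon)v)$. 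This is exactly how the paper concludes (``hence $U(\varepsilon,s)=\emptyset$ which is a contradiction''); the emptiness follows from the measure-zero statement via the above remark, not from openness of the sublevel set.

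Your proposed fix, by contrast, has a genuine gap. If you replace $u,v$ by smooth approximants $u_j\searrow u$, $v_j\searrow v$, the hypothesis $\omega_u^n(\{u<v\})=0$ does \emph{not} transfer to $\omega_{u_j}^n(\{u_j<v_j\})=0$; there is no reason for the approximating Monge--Amp\`ere measures to vanish on the approximating sublevel sets. So you cannot ``run the above argument in the smooth setting'' --- the crucial input $\omega_u^n(E_{\varepsilon,s})=0$ is lost. Weak convergence of $\omega_{u_j}^n$ to $\omega_u^n$ does not help either, since it gives no control of $\omega_{u_j}^n$ on the varying sets $\{u_j<v_j\}$.
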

\begin{proof}
Assume by contradiction that $U:=\{u<v\}$ is not empty and set $m_{\varepsilon}:= \inf_X (u-(1-\varepsilon)v)$, for  $\varepsilon\in [0,1)$. Since $v$ is bounded and $m_0<0$, we see that for $\varepsilon>0$ small enough $m_{\varepsilon}<m_0/2<0$. Set $U(\varepsilon,s):= \{u<(1-\varepsilon)v + m_{\varepsilon} +s\}$. Then for $s>0$ and $\varepsilon>0$ small enough we have $U(\varepsilon,s) \subset U$. Hence by the modified comparison principle we have
$$
\varepsilon^n \int_{U(\varepsilon,s)} \omega^n \leq \int_{U(\varepsilon,s)} \omega_{(1-\varepsilon)v}^n \leq \left (1 +\frac{Cs}{\varepsilon^n}\right )\int_{U(\varepsilon,s)} \omega_u^n =0. 
$$
It follows that, for such choice of $s,\varepsilon$, $\int_{U(\varepsilon,s)} \omega^n=0$, hence $U(\varepsilon,s)= \emptyset$ which is a contradiction.
\end{proof}

Using the modified comparison principle, it was proved in   \cite[Lemma 2.3]{Ng16AIM} that subsolutions are smaller than supersolutions for $L^p$-density. The same proof applies to give the following:

\begin{prop}\cite{Ng16AIM}
\label{prop: sub and super solutions}
Assume that $u$ and $v$ are bounded $\omega$-psh functions such that 
$$
\omega_u^n \geq e^{\lambda(u-v)} \omega_v^n,
$$
for some constant $\lambda >0$. 
Then $u\leq v$. 
\end{prop}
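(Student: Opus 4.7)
Argue by contradiction: suppose $M := \sup_X(u-v) > 0$. The plan is to apply the modified comparison principle with the roles of $u$ and $v$ swapped, to a small sublevel set on which the hypothesis forces $\omega_v^n$ to be exponentially dominated by $\omega_u^n$, and then compare with the trivial geometric bound $\omega_{(1-\varepsilon)u}^n \geq (1-\varepsilon)^n \omega_u^n$ to derive a contradiction.

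Set $m_\varepsilon := \inf_X(v-(1-\varepsilon)u)$ and $F_{\varepsilon,s} := \{v < (1-\varepsilon)u + m_\varepsilon + s\}$. For $0 < s < \varepsilon^3/(16B)$, the modified comparison principle (applied with the ordered pair $(v,u)$) yields
$$\int_{F_{\varepsilon,s}} \omega_{(1-\varepsilon)u}^n \leq \left(1+\frac{Cs}{\varepsilon^n}\right)\int_{F_{\varepsilon,s}}\omega_v^n.$$
Picking $x_0 \in X$ with $u(x_0)-v(x_0) > M - \delta$ gives $m_\varepsilon \leq v(x_0)-(1-\varepsilon)u(x_0) \leq -M + \varepsilon\, u(x_0) + \delta$, so that on $F_{\varepsilon,s}$,
$$u - v > \varepsilon u - m_\varepsilon - s \geq M - \varepsilon\,\mathrm{osc}(u) - s - \delta,$$
which exceeds $M/2$ once $\varepsilon, s, \delta$ are chosen small. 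The hypothesis then gives $\omega_v^n \leq e^{-\lambda(u-v)}\omega_u^n \leq e^{-\lambda M/2}\omega_u^n$ on $F_{\varepsilon,s}$, and combining with the pointwise bound $\omega_{(1-\varepsilon)u}^n \geq (1-\varepsilon)^n \omega_u^n$ (obtained by expanding $\omega_{(1-\varepsilon)u} = (1-\varepsilon)\omega_u + \varepsilon\omega$) yields
$$(1-\varepsilon)^n \int_{F_{\varepsilon,s}}\omega_u^n \leq \left(1+\frac{Cs}{\varepsilon^n}\right) e^{-\lambda M/2} \int_{F_{\varepsilon,s}}\omega_u^n.$$
Provided the common integral is positive, letting $s \to 0^+$ and then $\varepsilon \to 0^+$ gives the contradiction $1 \leq e^{-\lambda M/2}$.

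The main obstacle will be ensuring $\int_{F_{\varepsilon,s}}\omega_u^n > 0$, so that one may divide. I expect to handle this by using instead the alternative pointwise lower bound $\omega_{(1-\varepsilon)u}^n \geq \varepsilon^n \omega^n$ in the modified comparison, which reduces the question to showing $\omega^n(F_{\varepsilon,s}) > 0$. The sublevel set $F_{\varepsilon,s}$ is non-empty for $s>0$ (it contains a point near $x_0$), and positivity of its Lebesgue measure can be extracted by approximating $u$ and $v$ from above by smooth strictly $\omega$-psh functions via Demailly's theorem, along the lines of the proof of Lemma 2.3 in \cite{Ng16AIM}.
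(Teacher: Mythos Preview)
Your argument is correct and follows the same route the paper indicates (it gives no explicit proof of this proposition but refers to \cite[Lemma~2.3]{Ng16AIM} and to the domination-principle argument just above, which is precisely your strategy with the roles of $u$ and $v$ swapped). One unnecessary detour in your final paragraph: Demailly approximation is not needed for $\omega^n(F_{\varepsilon,s})>0$; since $F_{\varepsilon,s}\neq\emptyset$ by definition of $m_\varepsilon$, and both $v$ and $(1-\varepsilon)u+m_\varepsilon+s$ are $\omega$-psh, if $v\geq(1-\varepsilon)u+m_\varepsilon+s$ held a.e.\ it would hold everywhere, contradicting the definition of $m_\varepsilon$ --- this is exactly the step the paper uses in the proof of the preceding proposition.
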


Yet another application of the modified comparison principle yields the following minimum principle: 
\begin{prop}\cite[Proposition 2.5]{KN19}, \cite[Corollary 2.4]{Ng16AIM}
\label{prop: min principle}
Assume that $u$ and $v$ are continuous $\omega$-psh functions such that $\omega_u^n \leq c \omega_v^n$ on an open set $\Omega \subset X$. If $c<1$ then $\Omega\neq X$ and 
$$
\min_{\Omega} (u-v) = \min_{\partial \Omega} (u-v).
$$
\end{prop}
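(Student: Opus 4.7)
The plan is to argue by contradiction using the modified comparison principle, closely in the spirit of \cite[Lemma 2.3]{Ng16AIM}. Assume that $\min_{\overline{\Omega}}(u-v) < \min_{\partial\Omega}(u-v)$ (where, if $\Omega=X$, the right-hand side is understood as $+\infty$, so the inequality is automatic once we show that $X\setminus\Omega=\emptyset$ leads to a contradiction). After subtracting a constant from $v$ we may assume that $\min_{\overline{\Omega}}(u-v) < 0 < \min_{\partial\Omega}(u-v)$ and that the first minimum is attained at some $x_0$ in the open set $\Omega$.

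For $0<\varepsilon<1$, set $m_\varepsilon:=\inf_X\bigl(u-(1-\varepsilon)v\bigr)$ and $U(\varepsilon,s):=\{u<(1-\varepsilon)v+m_\varepsilon+s\}$. Since $v$ is bounded and $u-(1-\varepsilon)v = (u-v)+\varepsilon v$, the infimum $m_\varepsilon$ is finite, is attained at some interior point for $\varepsilon$ small enough (using that on $\partial\Omega$ the quantity $u-(1-\varepsilon)v$ stays bounded away from $m_\varepsilon$), and the sublevel set $U(\varepsilon,s)$ will be a nonempty open subset which is moreover contained in $\Omega$ provided $\varepsilon$ and $s$ are chosen small enough. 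This localisation step is the most delicate part of the argument: one has to use continuity of $u,v$, the strict boundary gap $\min_{\partial\Omega}(u-v)>0$, and the boundedness of $v$ to make $U(\varepsilon,s)\Subset\Omega$ quantitatively.

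Once $U(\varepsilon,s)\Subset\Omega$, apply the modified comparison principle with $s<\varepsilon^3/(16B)$:
$$
\int_{U(\varepsilon,s)}\omega_{(1-\varepsilon)v}^n \leq \left(1+\frac{Cs}{\varepsilon^n}\right)\int_{U(\varepsilon,s)}\omega_u^n.
$$
Since $\omega_{(1-\varepsilon)v}=(1-\varepsilon)\omega_v+\varepsilon\omega\geq \varepsilon\omega$, we have both $\omega_{(1-\varepsilon)v}^n\geq (1-\varepsilon)^n\omega_v^n$ and $\omega_{(1-\varepsilon)v}^n\geq \varepsilon^n\omega^n$. The hypothesis $\omega_u^n\leq c\,\omega_v^n$ on $\Omega$ then yields
$$
(1-\varepsilon)^n\int_{U(\varepsilon,s)}\omega_v^n \leq c\left(1+\frac{Cs}{\varepsilon^n}\right)\int_{U(\varepsilon,s)}\omega_v^n.
$$

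Since $c<1$, fix $\varepsilon$ so small that $(1-\varepsilon)^n>c$, then shrink $s$ further so that $(1-\varepsilon)^n>c(1+Cs/\varepsilon^n)$. The previous display forces $\int_{U(\varepsilon,s)}\omega_v^n=0$, hence $\int_{U(\varepsilon,s)}\omega_u^n=0$ by the density inequality, and therefore $\int_{U(\varepsilon,s)}\omega_{(1-\varepsilon)v}^n=0$ by the modified comparison principle. But $\omega_{(1-\varepsilon)v}^n\geq \varepsilon^n\omega^n$ gives $\int_{U(\varepsilon,s)}\omega^n=0$, contradicting the fact that $U(\varepsilon,s)$ is a nonempty open set. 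The same reasoning applied to the case $\Omega=X$ (where the boundary is empty and the minimum of $u-v$ is automatically attained in the interior) shows that $\Omega=X$ is impossible, concluding the proof.
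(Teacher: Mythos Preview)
The paper does not prove this proposition; it is cited from \cite[Proposition 2.5]{KN19} and \cite[Corollary 2.4]{Ng16AIM}, so there is no in-paper argument to compare with. Your strategy via the modified comparison principle is the correct one and matches those references; in particular the case $\Omega=X$ (showing that $\Omega\neq X$) goes through as you wrote it.

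There is, however, a real gap in the localisation step when $\Omega\subsetneq X$. You take $m_\varepsilon=\inf_X\bigl(u-(1-\varepsilon)v\bigr)$, the infimum over \emph{all} of $X$, and argue that $U(\varepsilon,s)\Subset\Omega$ using only the strict gap on $\partial\Omega$. But the hypothesis $\omega_u^n\le c\,\omega_v^n$ is imposed only on $\Omega$ and says nothing about $u-v$ on $X\setminus\overline\Omega$: nothing prevents a point $z\in X\setminus\overline\Omega$ with $u(z)-v(z)<\min_{\overline\Omega}(u-v)$, in which case $m_\varepsilon$ is attained near $z$ and $U(\varepsilon,s)$ escapes $\Omega$ no matter how small $\varepsilon,s$ are. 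The cure is to glue first: after your normalisation $\min_{\overline\Omega}(u-v)<0<\min_{\partial\Omega}(u-v)$, set $\tilde v:=\max(v,u)$ on $\Omega$ and $\tilde v:=u$ on $X\setminus\Omega$. Since $u>v$ near $\partial\Omega$ this patches to a global continuous $\omega$-psh function with $u-\tilde v\le 0$ everywhere, $u-\tilde v<0$ exactly on $\{u<v\}\cap\Omega$, and $\omega_{\tilde v}^n=\omega_v^n$ on that open set. Now $\inf_X\bigl(u-(1-\varepsilon)\tilde v\bigr)$ is genuinely attained inside $\Omega$, the sublevel sets sit in $\{u<v\}\cap\Omega$ for small parameters, and your modified-comparison-principle argument runs unchanged with $\tilde v$ in place of $v$.
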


As shown in \cite{KN15Phong}, given $0\leq f \in L^p$ with $\int_X f \omega^n >0$, there exist a unique constant $c_f>0$ and $u\in \PSH(X,\omega)\cap L^{\infty}(X)$ such that $\omega_u^n = c_ff \omega^n$. The density $f$ is MA-admissible if $c_f=1$. The total mass of an admissible density in $L^p$ is uniformly controlled from below. 

\begin{prop}\cite[Proposition 2.4]{KN19}, \cite[Proposition 2.7]{KN20}
\label{prop: control mass from below}
Fix a constant $A_0>1$. Then there exists a constant $V_{\min}>0$ depending on $(X,\omega,n, A_0)$ such that for any MA-admissible $0\leq f\in L^p$ with $\|f\|_p\leq A_0$, we have 
$$
\int_X f \omega^n \geq 2^{n+1}V_{\min}. 
$$
\end{prop}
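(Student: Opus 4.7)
The plan is to argue by contradiction. Assume there is a sequence of MA-admissible densities $f_j \in L^p(X)$ with $\|f_j\|_p \leq A_0$ but $m_j := \int_X f_j\omega^n \to 0$. Let $u_j \in \PSH(X,\omega) \cap \mathcal{C}^0(X)$ be the corresponding continuous solutions of $\omega_{u_j}^n = f_j \omega^n$, normalized so that $\sup_X u_j = 0$. The strategy is to extract a weak limit $u_\infty$ of (a subsequence of) the $u_j$, show that $\omega_{u_\infty}^n \equiv 0$ by passing to the limit in the Monge-Amp\`ere equation, and then use the domination principle to force $u_\infty \equiv 0$, which contradicts $\omega_0^n = \omega^n$ having positive total mass.

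The first and most delicate step is to obtain a uniform $L^\infty$ bound $\|u_j\|_\infty \leq M$ with $M$ depending only on $(X,\omega,p,A_0)$ and \emph{independent} of $m_j$. This is the main obstacle: the Kolodziej-Nguy{\^e}n a priori estimate in \cite{KN15Phong} controls the solution of $\omega_u^n = c_f f\omega^n$ in terms of $\|c_f f\|_p$ together with a positive lower bound on the MA-mass $c_f\int_X f\omega^n$, and a priori the latter could degenerate as $m_j \to 0$. The point is that MA-admissibility fixes $c_{f_j}\equiv 1$, hence $\|c_{f_j}f_j\|_p = \|f_j\|_p \leq A_0$; one can then compare $u_j$ with a smooth Tosatti-Weinkove reference solution of $\omega_w^n = c_0\omega^n$ from \cite{TW10} via Proposition \ref{prop: sub and super solutions} to rule out the degenerate regime and run the Kolodziej-Nguy{\^e}n iteration with $A_0$ alone as the effective input.

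With the uniform bound in hand, I extract a subsequence so that $u_j \to u_\infty$ in $L^1(X,\omega^n)$, with $u_\infty \in \PSH(X,\omega)\cap L^\infty(X)$ and $\sup u_\infty = 0$ inherited from the uniform bound. Uniform boundedness upgrades $L^1$-convergence to convergence in capacity (after a further subsequence); by the Bedford-Taylor continuity theorem for uniformly bounded $\omega$-psh functions, $\omega_{u_j}^n \to \omega_{u_\infty}^n$ weakly as measures on $X$. Since $\omega_{u_j}^n(X) = m_j \to 0$, we get $\omega_{u_\infty}^n \equiv 0$. Now apply the domination principle with $v \equiv 0$: from $\omega_{u_\infty}^n(\{u_\infty<0\}) \leq \omega_{u_\infty}^n(X) = 0$ we conclude $u_\infty \geq 0$, and together with $u_\infty \leq 0$ (the $L^1$-limit of non-positive functions) this forces $u_\infty \equiv 0$. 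But then $\omega_{u_\infty}^n = \omega^n$, which contradicts $\omega_{u_\infty}^n \equiv 0$ because $\int_X \omega^n > 0$.
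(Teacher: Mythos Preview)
The paper does not supply its own proof of this proposition; it is quoted from \cite[Proposition~2.4]{KN19} and \cite[Proposition~2.7]{KN20} as background. Your contradiction strategy (uniform $L^\infty$ bound, compactness, weak continuity of Monge--Amp\`ere, then the domination principle to force $u_\infty\equiv 0$) is exactly the approach of the original reference, so in that sense the proposal is correct.

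One point should be clarified. You flag the uniform $L^\infty$ estimate as the ``most delicate step'' because you believe the Ko{\l}odziej--Nguy{\^e}n bound from \cite{KN15Phong} requires a positive lower bound on the total Monge--Amp\`ere mass. It does not: the a priori estimate \cite[Corollary~5.6]{KN15Phong} for a solution of $\omega_u^n=f\omega^n$ with $\sup_X u=0$ depends only on $(X,\omega,n,p)$ and an upper bound for $\|f\|_p$. Since MA-admissibility gives $c_{f_j}=1$, the effective density is $f_j$ itself and $\|f_j\|_p\le A_0$ is exactly the hypothesis, so the uniform bound is immediate. Your proposed workaround via Proposition~\ref{prop: sub and super solutions} and a Tosatti--Weinkove reference solution is therefore unnecessary; more to the point, as stated it is too vague to be a proof, since that proposition requires an inequality of the form $\omega_{u_j}^n\ge e^{\lambda(u_j-w)}\omega_w^n$, and there is no evident way to manufacture such a comparison from $\omega_{u_j}^n=f_j\omega^n$ and $\omega_w^n=c_0\omega^n$ alone.

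The remaining steps (compactness in $\PSH(X,\omega)$, passage from $L^1$-convergence of uniformly bounded quasi-psh functions to convergence in capacity, weak continuity of $\omega_u^n$ along such sequences, and the domination principle) are correctly invoked and hold in the Hermitian setting.
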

Although the total mass of $\omega_u^n$ depends on $u$, we can control the total mass of the Laplacian of $u$ by using a Gauduchon metric.

\begin{lemma}
\label{lem: Gauduchon}
There exists a uniform constant $C>0$ such that 
$$
C^{-1} \leq \int_X \omega_u \wedge \omega^{n-1} \leq C, \ \forall u \in \PSH(X,\omega) \cap L^{\infty}(X). 
$$
\end{lemma}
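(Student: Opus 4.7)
My plan is to invoke Gauduchon's theorem, which produces a smooth, strictly positive function $\rho$ on $X$ such that the form $\Omega := \rho \, \omega^{n-1}$ is $dd^c$-closed (equivalently, $\Omega = \tilde\omega^{n-1}$ for the Gauduchon representative $\tilde\omega$ of the conformal class of $\omega$). This form plays the role of $\omega^{n-1}$ in the K\"ahler case, and is the standard substitute when one wants to extract cohomology-type invariants in the Hermitian setting.

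The heart of the argument is the observation that, for every $u \in \PSH(X,\omega)\cap L^{\infty}(X)$,
$$
\int_X \omega_u \wedge \Omega \;=\; \int_X \omega \wedge \Omega + \int_X dd^c u \wedge \Omega \;=\; \int_X \omega \wedge \Omega,
$$
since $\Omega$ is a smooth $dd^c$-closed $(n-1,n-1)$-form and $u$ is bounded. In particular, the integral on the left is a constant depending only on $(X,\omega,n)$. For smooth $u$ this is just Stokes; for general bounded $\omega$-psh $u$ one approximates $u$ from above by smooth strictly $\omega$-psh functions using Demailly's regularization theorem and passes to the limit via the standard continuity of the mixed Monge--Amp\`ere operator along decreasing sequences (Bedford--Taylor).

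To deduce the lemma, note that since $\rho$ is smooth and strictly positive on the compact manifold $X$, there exist constants $0<\rho_{\min} \leq \rho_{\max}$ with
$$
\rho_{\min}\,\omega^{n-1} \;\leq\; \Omega \;\leq\; \rho_{\max}\,\omega^{n-1}
$$
as smooth strongly positive $(n-1,n-1)$-forms. Wedging against the positive current $\omega_u$ and integrating gives
$$
\rho_{\min}\int_X \omega_u \wedge \omega^{n-1} \;\leq\; \int_X \omega \wedge \Omega \;\leq\; \rho_{\max}\int_X \omega_u \wedge \omega^{n-1},
$$
which yields both the uniform upper and the uniform lower bound with a constant $C$ depending only on $(X,\omega,n)$.

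The only mildly delicate step is justifying the integration by parts for the a priori non-smooth function $u$, but this is routine given Demailly's regularization and Bedford--Taylor theory, so I do not expect any serious obstacle; the substantive input is entirely the existence of the Gauduchon form $\Omega$.
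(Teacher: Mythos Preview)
Your proof is correct and follows essentially the same approach as the paper: both invoke Gauduchon's theorem to obtain a positive conformal factor (your $\rho$, the paper's $e^G$) making $\rho\,\omega^{n-1}$ $dd^c$-closed, apply Stokes to see that $\int_X \omega_u \wedge \rho\,\omega^{n-1}$ is independent of $u$, and then use the uniform positivity bounds on $\rho$. Your added remark on justifying the integration by parts via Demailly regularization and Bedford--Taylor convergence is a welcome clarification of a step the paper leaves implicit.
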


\begin{proof}
Let $G$ be a smooth function on $X$ such that $dd^c (e^G \omega^{n-1})=0$. The existence of $G$ follows from \cite{Gau77}. Using Stokes' theorem we then have 
$$
\int_X \omega_u \wedge (e^G \omega^{n-1}) = \int_X e^G \omega^n,
$$
from which the estimates follow. 
\end{proof}

\section{Stability of solutions}\label{sect: stability}

\subsection{On Hermitian manifolds}

We first extend the elliptic stability theorem in \cite{GLZAIF} to the non-K\"ahler case. 

\begin{theorem}\label{thm: GLZ Hermitian proof}
Fix $0\leq  f,g \in L^p(X,\omega^n)$, $p>1$ such that $\int_X f \omega^n >0$ and $\int_X g \omega^n >0$. 
Assume that $u,v$ are bounded $\omega$-psh functions on $X$ satisfying
$$
(\omega +dd^c u)^n = e^u f \omega^n \ \text{and} \ (\omega +dd^c v)^n = e^{v} g \omega^n. 
$$
Then for some constant $C>0$ depending on $X, \omega,n,p$, an upper bound  for $\|f\|_p, \|g\|_p$ and a positive lower bound for $\|f\|_{1/n}$, $\| g\|_{1/n}$, we have 
\begin{equation}\label{eq: stability}
    |u - v| \leq C \|f-g\|_p^{1/n}. 
\end{equation}
\end{theorem}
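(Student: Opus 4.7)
The plan is to follow the strategy of Guedj--Lu--Zeriahi \cite{GLZAIF} from the K\"ahler setting, replacing the classical comparison principle (which fails on Hermitian manifolds) by Proposition \ref{prop: sub and super solutions}. The self-correcting nature of the exponential factors $e^u,e^v$ on the right-hand sides is precisely what should allow the optimal exponent $1/n$.

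First I would establish uniform a priori bounds $\|u\|_\infty,\|v\|_\infty\le C_0$ depending only on the stated data. Integrating the equation gives $\int_X e^u f\,\omega^n = \int_X \omega_u^n$, which, combined with Lemma \ref{lem: Gauduchon} and Proposition \ref{prop: control mass from below}, bounds $\int_X\omega_u^n$ from above and below; together with $\int_X f^{1/n}\omega^n\ge c_0$ and a short bootstrap using the exponential, this fixes $\sup u$. The Kolodziej--Nguy\^en $L^\infty$-estimate from \cite{KN15Phong} applied to $\omega_u^n=(e^u f)\,\omega^n$ then bounds $\sup u-\inf u$ in terms of $\|e^u f\|_p$ and $\int_X e^u f\,\omega^n$, yielding two-sided bounds. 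By the symmetry of the statement in $(u,f)\leftrightarrow(v,g)$, it then suffices to prove the one-sided bound $u-v\le C\|f-g\|_p^{1/n}$.

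For the core step, set $\varepsilon:=A\|f-g\|_p^{1/n}$ with $A$ a constant to be fixed (one may assume $\varepsilon$ small, else the estimate is trivial). I would introduce an auxiliary $\omega$-psh function $\rho$ solving a Monge--Amp\`ere equation
\[
(\omega+dd^c\rho)^n = e^{\rho} h\,\omega^n,
\]
where $h\in L^p(X,\omega^n)$ is tailored so that $(e^\rho h)^{1/n}$ dominates a suitable multiple of $|f-g|^{1/n}$ pointwise; existence and uniform $L^\infty$-bounds for $\rho$ are provided by \cite{KN15Phong}. Consider then the convex combination $w:=(1-\varepsilon)u+\varepsilon\rho$. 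Since $\omega_w=(1-\varepsilon)\omega_u+\varepsilon\omega_\rho$, the pointwise Brunn--Minkowski inequality $\det^{1/n}(A+B)\ge\det^{1/n}A+\det^{1/n}B$ for positive Hermitian matrices gives
\[
\omega_w^n \ \ge\ \bigl((1-\varepsilon)(e^u f)^{1/n}+\varepsilon(e^\rho h)^{1/n}\bigr)^n\omega^n.
\]
After expanding, using the design of $\rho$ and calibrating $A$, I expect to derive a \emph{global} pointwise inequality of the form $\omega_w^n\ge e^{\lambda(w-(v+C\varepsilon))}\omega_v^n$ for some $\lambda>0$. Proposition \ref{prop: sub and super solutions} then forces $w\le v+C\varepsilon$, which, together with the uniform bound on $\rho$, rearranges to $u-v\le C'\|f-g\|_p^{1/n}$, as desired.

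The main obstacle is that Proposition \ref{prop: sub and super solutions} demands the subsolution inequality $\omega_w^n\ge e^{\lambda(w-v)}\omega_v^n$ \emph{pointwise on all of $X$}, not merely on a sublevel set as the modified comparison principle would allow. The auxiliary potential $\rho$ therefore has to be engineered so that the Brunn--Minkowski contribution $\varepsilon(e^\rho h)^{1/n}$ absorbs the deficit between $(e^u f)^{1/n}$ and $(e^v g)^{1/n}$ at \emph{every} point of $X$; this is where the $L^p$-distance $\|f-g\|_p$ has to be converted cleanly into a uniform additive error of order $\|f-g\|_p^{1/n}$. A secondary subtlety is to ensure every intermediate constant depends only on the data listed in the statement, in particular on the lower bounds $\|f\|_{1/n},\|g\|_{1/n}\ge c_0$ and the upper bounds $\|f\|_p,\|g\|_p\le A_0$, which are exactly what guarantees the $L^\infty$-control on $\rho$ via the existence theory of \cite{KN15Phong}.
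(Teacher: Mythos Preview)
Your proposal is correct and follows essentially the same route as the paper: uniform a priori bounds on $u,v$, an auxiliary potential $\rho$, the convex combination $w=(1-\varepsilon)u+\varepsilon\rho$, the Brunn--Minkowski/mixed Monge--Amp\`ere inequality, and then Proposition \ref{prop: sub and super solutions} applied globally with $\lambda=1$. The only substantive differences are cosmetic. The paper takes $\rho$ to solve the \emph{non}-exponential equation $(\omega+dd^c\rho)^n=c_h\bigl(|f-g|/\|f-g\|_p+1\bigr)\omega^n$ with $\sup_X\rho=0$, which makes the arithmetic cleaner: one chooses $\varepsilon^n$ proportional to $\|f-g\|_p$ so that the $\varepsilon^n\omega_\rho^n$ term dominates $e^{\sup_X u}|f-g|\,\omega^n\ge e^u|f-g|\,\omega^n$ pointwise, and then $\phi:=(1-\varepsilon)u+\varepsilon\rho-K\varepsilon+n\log(1-\varepsilon)$ with $K=\sup_X(-u)$ satisfies $\omega_\phi^n\ge e^\phi g\,\omega^n$ directly. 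Your version with $e^\rho h$ on the right works too, but the explicit choice of $h$ you left open is exactly $|f-g|/\|f-g\|_p+1$ (or any uniformly $L^p$-bounded density dominating $|f-g|/\|f-g\|_p$); once you write this down the ``main obstacle'' you flagged dissolves. For the a priori bounds, the paper compares $u$ with the solution $\varphi$ of $\omega_\varphi^n=c_f f\,\omega^n$ via Proposition \ref{prop: sub and super solutions}, which is slightly more direct than the bootstrap you sketch but amounts to the same dependence on the data.
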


\begin{rema}
As shown in \cite[Remark 2.2]{GLZAIF} the $L^p$-norm can be replaced by the $L^1$-norm and the exponent becomes $1/(n+\varepsilon)$, where $\varepsilon>0$ is arbitrarily small.  
\end{rema}

\begin{proof}
 The proof  uses a perturbation argument due to Ko{\l}odziej  \cite{Kol96APM}.  
 By uniqueness, \cite[Theorem 0.1]{Ng16AIM}, if $\|f-g\|_p=0$ then $u=v$ and \eqref{eq: stability} holds for any $C$. Hence we can assume that $\|f-g\|_p>0$. 
 
 Let $\varphi$ be a bounded $\omega$-psh function on $X$ such that $\sup_X \varphi=0$ and 
$$
(\omega+dd^c \varphi)^n = c_f f \omega^n,
$$
where $c_f$ is a constant. The existence of $\varphi$ and $c_f$ follows from \cite[Theorem 5.8]{KN15Phong}. It follows from \cite[Proposition 2.4]{KN19} that $0<c_f$ is uniformly bounded from below.  To bound $c_f$ from above we use the Gauduchon metric as in \cite{Ng16AIM}. Let $G$ be a smooth function on $X$ such that $dd^c (e^{G} \omega^{n-1}) =0$.  It follows from the mixed Monge-Amp\`ere inequality, \cite[Lemma 1.9]{Ng16AIM} that 
$$
(\omega +dd^c \varphi) \wedge e^{G}\omega^{n-1} \geq e^{G}( c_f f )^{1/n} \omega^n. 
$$
Integrating over $X$ and using Stokes theorem we arrive at
$$
\int_X e^{G} \omega^n \geq  e^{\min_X G} \int_{X} (c_f f)^{1/n}\omega^n.
$$
Thus $c_f>0$ is uniformly bounded.
 The uniform a priori estimate in  \cite{KN15Phong} also ensures that $\varphi$ is uniformly bounded. Hence, for some uniform constant $C_1>0$ we have that 
$$
(\omega+dd^c \varphi)^n \geq e^{\varphi-C_1} f\omega^n \ ; \ (\omega+dd^c \varphi)^n \leq e^{\varphi+C_1} f \omega^n.
$$
Combining this with \cite[Lemma 2.3]{Ng16AIM}, we obtain $\varphi -C_1 \leq u\leq \varphi+C_1$, hence $u$ is also uniformly bounded by a constant $C_2$ depending on the parameters in the statement of Theorem \ref{thm: GLZ Hermitian}. By the same arguments as above, we see that $|v|\leq C_3$ for some uniform constant $C_3>0$. 
 
Let $\rho$ be the unique continuous $\omega$-psh function on $X$, normalized by $\sup_X \rho =0$, such that
\begin{equation}
\label{eq: rho}
(\omega +dd^c \rho)^n = c_{h} h \omega^n = c_h \left ( \frac{|f-g|}{\|f-g\|_p}  +  1 \right)\omega^n. 
\end{equation}
The existence of $\rho$ follows from \cite[Theorem 5.8]{KN15Phong}. 
It follows from \cite[Lemma 2.1]{KN19} that $c_h\leq 1$. Since $1\leq \|h\|_p \leq 2$, it follows from \cite[Proposition 2.4]{KN19} that $c_h \geq c_1>0$ where $c_1$ is a uniform constant.

We now set $\varepsilon:= e^{(\sup_X u-\ln c_1)/n} \|f-g\|_p^{1/n}$ and consider two cases. If  $\varepsilon>1/2$ then 
\[
\|f-g\|_p^{1/n} \geq  \frac{c_1^{1/n}}{2} e^{-\sup_X u},
\] 
hence, for $C\geq 2(C_2+C_3)c_1^{-1/n} e^{\sup_X u/n}$,  we have 
\[
|u-v| \leq C_2+C_3 \leq C \|f-g\|_p^{1/n}. 
\]
If $\varepsilon\leq 1/2$ we consider 
$$
\phi := (1-\varepsilon) u + \varepsilon \rho  -K \varepsilon + n\log(1-\varepsilon),  
$$
where $K>0$ is a constant to be specified later. The Monge-Amp\`ere measure of $\phi$ is estimated as follows: 
$$
(\omega +dd^c \phi)^n \geq e^{u + n\log (1-\varepsilon)} f \omega^n + e^{u} |f-g|\omega^n \geq e^{u+n\log(1-\varepsilon)} g \omega^n. 
$$
If we choose $K= \sup_X (-u)$ then 
$$
(\omega +dd^c \phi)^n \geq e^{\phi} g \omega^n,
$$
and Proposition \ref{prop: sub and super solutions} yields $\phi \leq v$, hence $u-v \leq C_4\varepsilon$. Reversing the role of $u$ and $v$ we obtain the result.
\end{proof}

Using Theorem \ref{thm: GLZ Hermitian proof} we will improve the stability exponent in \cite{KN19}. 
We first prove the following refinement of \cite[Lemma 3.4]{KN19}.
\begin{lemma}
\label{lem: KN15 Lemma 3.4}
Assume that $0\leq f,g\in L^{p}\left(X\right)$ satisfy 
\begin{equation}\label{eq: stability reduced 1}
    e^{-\varepsilon} f \leq g \leq e^{\varepsilon} f,
\end{equation}
for some (small) positive constant $\varepsilon$. Let $u$ and $v$
be continuous $\omega$-psh functions on $X$ such that 
\[
\omega_{u}^{n}=f\omega^{n},\;\omega_{v}^{n}=g\omega^{n}\; \text{and}\ \sup_{X}u=\sup_{X}v=0.
\]
Fix $t_{1}>t_{0}:=\inf_{X}\left(u-v\right)$. If $\intop_{\left\{ u-v<t_{1}\right\} }f\omega^{n}\leq V_{\textrm{min}}$
then, for some uniform constant $C>0$ depending on $(X,\omega,n,p)$, an upper bound $C_p$  for $\|f\|_p$, and a positive lower bound  for $\|f\|_{1/n}$, we have  $$ t_1 - t_0 \leq C \varepsilon. $$Here
$V_{\textrm{min}}$ is the constant in Proposition \ref{prop: control mass from below} corresponding to 
$A_{0}:=2^{n}C_{p}$.
\end{lemma}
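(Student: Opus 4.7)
The plan is to adapt the proof of \cite[Lemma 3.4]{KN19} by replacing its strict positivity hypothesis $f \geq c_0 > 0$ with the pointwise closeness $e^{-\varepsilon} f \leq g \leq e^{\varepsilon} f$; the latter will supply the required positivity through a Newton expansion of $\omega_{(1-\delta)v}^n$. The core tool is the modified comparison principle applied to $u$ and $(1-\delta)v$, combined with the total-mass lower bound of Proposition \ref{prop: control mass from below}.

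By the Ko{\l}odziej--Nguy\^en $L^\infty$ estimates one has uniform bounds $-M \leq u, v \leq 0$ for some $M$ depending on the stated parameters. Set $\eta := t_1 - t_0$; the target is $\eta \leq C\varepsilon$. For $\delta \in (0,1)$ to be chosen, let $m_\delta := \inf_X(u - (1-\delta) v)$ and $E(s) := \{u < (1-\delta) v + m_\delta + s\}$. A direct computation using $-M \leq v \leq 0$ gives $m_\delta \in [t_0 - \delta M, t_0]$ and $E(s) \subset \{u - v < t_0 + s + \delta M\}$. Writing $\omega_{(1-\delta)v} = (1-\delta)\omega_v + \delta \omega$ and using $\omega_v^n \geq e^{-\varepsilon} \omega_u^n$, the Newton expansion gives $\omega_{(1-\delta)v}^n \geq (1-\delta)^n e^{-\varepsilon} \omega_u^n + \delta^n \omega^n$. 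Substituting into the modified comparison principle, valid for $0 < s < \delta^3/(16B)$, and using $1 - (1-\delta)^n e^{-\varepsilon} \leq n\delta + \varepsilon$, one should arrive at
\[
\delta^n \int_{E(s)} \omega^n \;\leq\; \Bigl( n\delta + \varepsilon + \tfrac{Cs}{\delta^n} \Bigr) \int_{E(s)} f \omega^n. \qquad (\ast)
\]
Choosing $\delta, s$ so that $s + \delta M \leq \eta$ places $E(s) \subset \{u - v < t_1\}$, so the mass hypothesis forces $\int_{E(s)} f \omega^n \leq V_{\min}$, while Proposition \ref{prop: control mass from below} guarantees $\int_X f \omega^n \geq 2^{n+1} V_{\min}$, pinning most of the $f$-mass inside $X \setminus E(s)$.

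I expect the main technical obstacle to lie in the tension between the two constraints $s + \delta M \leq \eta$ (so that the mass hypothesis applies on $E(s)$) and $s < \delta^3/(16B)$ (so that the modified comparison principle applies): the latter forces $s$ to be cubic in $\delta$, making the error term $Cs/\delta^n$ in $(\ast)$ of order $\delta^{3-n}$, which does not decay as $\delta \to 0$ for $n \geq 3$. My plan to bypass this is two-staged: first, combine $(\ast)$ with H\"older's inequality $\int_{E(s)} f \omega^n \leq \|f\|_p \bigl(\int_{E(s)} \omega^n\bigr)^{1/q}$ to obtain a preliminary sub-optimal bound $\eta \leq C\varepsilon^\alpha$ for some $\alpha \in (0,1)$, in the spirit of \cite[Lemma 3.4]{KN19}; then bootstrap, using the resulting modulus of continuity of $u - v$ near its minimum to produce a Lebesgue-measure lower bound on $E(s)$ that, fed back into $(\ast)$ with the balance $\delta \sim \eta$ and $s \sim \delta^3$, sharpens the exponent to $1$ and yields the announced linear rate.
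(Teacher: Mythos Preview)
Your approach diverges from the paper's, and as written it has a genuine gap.

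The paper does \emph{not} work with the modified comparison principle on $u$ and $(1-\delta)v$. Instead it builds an auxiliary density $\hat f$ equal to $f$ on $\Omega(t_1):=\{u<v+t_1\}$ and to $f/A$ elsewhere, with $A>1$ chosen so that $\int_X \hat f\,\omega^n<2V_{\min}$. Solving $(\omega+dd^c\hat u)^n=\hat c\,\hat f\,\omega^n$, Proposition~\ref{prop: control mass from below} forces $\hat c\ge 2^n$. Then for $\psi_s:=(1-s)v+s\hat u$ the mixed Monge--Amp\`ere inequality together with $g\ge e^{-\varepsilon}f$ and $\hat c\ge 2^n$ yields, on $\Omega(t_1)$,
\[
\omega_{\psi_s}^n \ge \bigl((1-s)e^{-\varepsilon/n}+2s\bigr)^n f\,\omega^n,
\]
so that at $s=\varepsilon$ one has $\omega_{\psi_\varepsilon}^n\ge (1+c)\,f\,\omega^n=(1+c)\,\omega_u^n$ on $\Omega(t_1)$ for some $c>0$. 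The \emph{minimum principle} (Proposition~\ref{prop: min principle}) then gives $\max_{\Omega(t_1)}(\psi_\varepsilon-u)=\max_{\partial\Omega(t_1)}(\psi_\varepsilon-u)$, and evaluating at the point where $u-v$ attains its infimum produces $t_1-t_0\le C\varepsilon$ in one stroke. The role of the mass hypothesis is solely to make $\int_X\hat f\,\omega^n$ small so that $\hat c\ge 2^n$; no bootstrap is needed.

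Your inequality $(\ast)$ is correct, but it only furnishes an \emph{upper} bound on $\int_{E(s)}\omega^n$ (or, via the mass hypothesis, an upper bound $\le V_{\min}$ on $\int_{E(s)}f\,\omega^n$). To extract a bound on $\eta=t_1-t_0$ you would need a competing \emph{lower} bound on one of these quantities, and none is available: the sets $E(s)$ can be arbitrarily small in Lebesgue measure even when $\eta$ is large. Your proposed cure---using a ``modulus of continuity of $u-v$ near its minimum'' to get a Lebesgue lower bound on $E(s)$---is circular, since such a modulus is precisely what the lemma is meant to establish. The tension you identified between $s<\delta^3/(16B)$ and $s+\delta M\le\eta$ is real, but it is a symptom of the deeper problem that the modified comparison principle by itself does not control oscillation; the paper's route via the auxiliary potential $\hat u$ and the minimum principle is what actually closes the argument.
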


\begin{proof}
Define 
$$
\hat{f}(z) = \begin{cases} f(z), \ \text{if} \ u(z) <v(z) + t_1,\\
\frac{1}{A} f(z), \ \text{if} \ u(z) \geq v(z)+t_1, 
\end{cases}
$$
where $A>1$ is a uniform constant ensuring that $\int_X \hat{f} \omega^n < 2V_{\min}$. Let $\hat{c}>0$ be a constant and $\hat{u}$ be a continuous $\omega$-psh function such that 
$$
(\omega +dd^c \hat{u})^n = \hat{c} \hat{f} \omega^n, \ \ \sup_X \hat{u}=0.
$$
It follows from Proposition \ref{prop: control mass from below} and \cite[Corollary 2.4]{Ng16AIM} that $2^n \leq \hat{c}\leq A$, hence by \cite[Corollary 5.6]{KN15Phong}, $\hat{u}$ is uniformly bounded.

For $s\in (0,1)$, define  $\psi_s := (1-s) v + s \hat{u}$. By the mixed Monge-Amp\`ere inequality \cite[Lemma 1.9]{Ng16AIM} we have
$$
(\omega+dd^c \psi_s)^n \geq  \left ((1-s) g^{1/n} + s (\hat{c}f)^{1/n} \right )^{n} \omega^n
$$
on $\Omega(t_1):= \{u <v +t_1\}$. By the assumption \eqref{eq: stability reduced 1} and the inequality $a^{1/n} \geq a$, for $a\in (0,1)$, we have 
$$
(\omega+dd^c \psi_s)^n \geq  \left ((1-s) e^{-\varepsilon/n} f^{1/n} + s (2^n f)^{1/n} \right )^{n} \omega^n, 
$$
in $\Omega(t_1)$. 
Thus, for $s=\varepsilon$ we have $(\omega+dd^c \psi_s)^n \geq (1+\varepsilon^2/n) f \omega^n$ in $\Omega(t_1)$. As in \cite[Lemma 3.4]{KN19} we now invoke the minimum principle, Proposition \ref{prop: min principle},  to obtain 
\[
\max_{\Omega(t_1)}(\psi_s-u) =\max_{\partial \Omega(t_1)} (\psi_s-u).
\]
But on $\partial \Omega(t_1)$ we have $u=v+t_1$, hence $\psi_s - u + t_1 \leq  C_1s$ on $\partial \Omega(t_1)$, where $C_1$ is a uniform constant. Let $x_0\in X$ be such that $u(x_0)-v(x_0)=t_0$. Then $x_0 \in \Omega(t_1)$, hence  $\psi_s(x_0) -u(x_0) \leq \max_{\partial \Omega(t_1)} (\psi_s-u)$. We then infer that $t_1-t_0 \leq C s$ as desired. 
\end{proof}

\begin{prop}\label{prop: approximation}
Assume that $u$ is a continuous $\omega$-psh function such that $\omega_u^n = f\omega^n$, where $0\leq f\in L^p(X)$, $p>1$. Let $f_j>0$ be a sequence of smooth densities converging to $f$ in $L^p(X)$ and  let $u_j$ be a sequence of smooth $\omega$-psh functions decreasing to $u$. Let $v_j$ be the unique smooth $\omega$-psh function such that 
$$
\omega_{v_j}^n = e^{v_j-u_j} f_j \omega^n. 
$$
Then $v_j$ converges uniformly  to $u$. 
\end{prop}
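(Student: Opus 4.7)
The key observation is that the given equations can be rewritten in the form treated by Theorem \ref{thm: GLZ Hermitian proof}. Setting $F := e^{-u} f$, the equation $\omega_u^n = f \omega^n$ becomes $\omega_u^n = e^{u} F \omega^n$. Similarly, setting $F_j := e^{-u_j} f_j$, the equation for $v_j$ reads $\omega_{v_j}^n = e^{v_j} F_j \omega^n$. Thus both $u$ and $v_j$ are solutions to Monge-Amp\`ere equations of the type in Theorem \ref{thm: GLZ Hermitian proof}, and the task reduces to verifying the hypotheses and that $\|F_j - F\|_p \to 0$.

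First I would observe that $u_j$ converges to $u$ uniformly on $X$: indeed $u_j$ is a decreasing sequence of continuous functions converging to the continuous function $u$ on the compact space $X$, so Dini's theorem applies. In particular, for $j$ large, $\|u_j\|_\infty$ is bounded by a constant independent of $j$, and $e^{-u_j} \to e^{-u}$ uniformly. Combined with $f_j \to f$ in $L^p$, this gives
\[
\|F_j - F\|_p \leq \|e^{-u_j}\|_\infty \|f_j - f\|_p + \|f\|_p \|e^{-u_j} - e^{-u}\|_\infty \longrightarrow 0.
\]
In particular $\|F_j\|_p$ is uniformly bounded from above by a constant $C_p$.

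Next I would check the uniform positive lower bound on $\|F_j\|_{1/n}$. Since $\int_X f\omega^n > 0$ (recall Remark 5.7 of \cite{KN15Phong}, applied to the bounded $\omega$-psh function $u$), we have $\int_X f^{1/n}\omega^n > 0$; together with the uniform bound on $\|u_j\|_\infty$ and the $L^p$ (hence $L^1$) convergence of $f_j^{1/n}$ to $f^{1/n}$, we deduce
\[
\int_X F_j^{1/n}\omega^n = \int_X e^{-u_j/n} f_j^{1/n}\omega^n \geq e^{-\|u_j\|_\infty/n} \int_X f_j^{1/n}\omega^n \geq c_0 > 0
\]
for $j$ large, where $c_0$ does not depend on $j$. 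The same bounds of course hold for $F$ in place of $F_j$.

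With these hypotheses established, Theorem \ref{thm: GLZ Hermitian proof} applies uniformly in $j$ to the pair $(u, v_j)$ and yields
\[
\|u - v_j\|_{L^\infty(X)} \leq C \|F - F_j\|_p^{1/n},
\]
with $C$ independent of $j$. Since the right-hand side tends to $0$, we conclude that $v_j$ converges uniformly to $u$. The only subtle point is checking uniform control of $\|F_j\|_{1/n}$ from below, which is where the assumption $\int_X f\omega^n > 0$ (implicitly provided by the existence of the solution $u$) is essential; the rest of the argument is a clean reduction to the stability theorem already proved.
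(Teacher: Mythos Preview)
Your proof is correct and follows essentially the same approach as the paper: rewrite both equations in the form $\omega_\varphi^n = e^{\varphi} F\,\omega^n$ with $F=e^{-u}f$, $F_j=e^{-u_j}f_j$, verify uniform upper bounds on $\|F_j\|_p$ and lower bounds on $\|F_j\|_{1/n}$, and apply Theorem~\ref{thm: GLZ Hermitian proof}. Your version is in fact a bit more explicit than the paper's (invoking Dini's theorem for the uniform convergence $u_j\to u$ and writing out the triangle-inequality estimate for $\|F_j-F\|_p$), while the paper additionally records that $v_j$ is uniformly bounded via \cite[Corollary 5.6]{KN15Phong}; this last point is not strictly needed as input to the stability theorem, since the constant there depends only on the $L^p$ and $L^{1/n}$ data.
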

Note that the smoothness of $v_j$ follows from \cite{Cher87}. 
\begin{proof}
Recall that, from \cite[Remark 5.7]{KN19} we have  $\int_X f^{1/n} \omega^n>0$. 
Set $F_j := e^{-u_j} f_j$ and $F:= e^{-u}f$. By \cite[Corollary 5.6]{KN15Phong}, $v_j$ is uniformly bounded. Hence $1/C \leq \int_X F_j^{1/n}$ and $\|F_j\|_p \leq C_1$, for a uniform constant $C_1$. Theorem \ref{thm: GLZ Hermitian} yields $|v_j-u| \leq C_2 \|F_j-F\|_p^{1/n}$, for a uniform constant $C_2$.  Hence $v_j$  uniformly converges  to $u$. 
\end{proof}

\begin{theorem}
Assume that $u$ and $v$ are $\omega$-psh continuous solutions to 
$$
(\omega+dd^c u)^n = f\omega^n \ ,  \ (\omega+dd^c v)^n =g\omega^n, \ \sup_X u= \sup_X v=0,
$$
where $f,g\in L^p(X), p>1$ and $f\geq c_0>0$. Then 
$$
\sup_X |u-v| \leq C \|f-g\|_p^{1/n},
$$
where $C$ depends on $X,\omega, n, p, c_0$, an upper bound for $\|f\|_p+ \|g\|_p$, and a positive lower bound for $\|g\|_{1/n}$.  
\end{theorem}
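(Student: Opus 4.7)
The plan is to adapt the scheme of \cite{KN19} to achieve the improved exponent $1/n$, leveraging the refined Lemma \ref{lem: KN15 Lemma 3.4} established above.

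First, I would use Proposition \ref{prop: approximation} to reduce to the case where $u, v$ are smooth solutions of smooth exponential MA equations approximating the originals in $L^p$, so that all manipulations are justified and one can pass to the limit using uniform convergence. Next, set $\varepsilon := A\|f-g\|_p^{1/n}$ for a constant $A$ to be chosen (if $\varepsilon \geq 1/2$ the desired estimate is automatic from the a priori $L^\infty$ bound, so one may assume $\varepsilon$ small). The key reduction is the truncation
\[
\hat{g} := \max\bigl(e^{-\varepsilon}f,\ \min(e^{\varepsilon}f, g)\bigr),
\]
which by construction satisfies $e^{-\varepsilon}f \leq \hat{g} \leq e^{\varepsilon}f$ pointwise and $|\hat{g} - g| \leq |f-g|$, hence $\|\hat{g} - g\|_p \leq \|f-g\|_p$.

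I would then let $\hat{v}$ be the continuous $\omega$-psh solution of $\omega_{\hat{v}}^n = c_{\hat{g}}\hat{g}\omega^n$ with $\sup_X\hat{v} = 0$. By Corollary \ref{cor: stability of MA constant}, $|c_{\hat{g}} - 1| \leq C\varepsilon$, so $c_{\hat{g}}\hat{g}$ remains multiplicatively close to $f$ within a factor $e^{\pm C'\varepsilon}$. Applying Lemma \ref{lem: KN15 Lemma 3.4} to the pair $(u, \hat{v})$ with densities $(f, c_{\hat{g}}\hat{g})$ yields $\|u - \hat{v}\|_\infty \leq C\varepsilon$, provided the hypothesis $\int_{\{u - \hat{v} < t_1\}} f\omega^n \leq V_{\min}$ is verified for an appropriate $t_1$ near $t_0 := \inf_X(u - \hat{v})$. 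The latter follows from the continuity of $u - \hat{v}$ together with Ko\l{}odziej's $L^p$-capacity estimate, which ensures the $f\omega^n$-mass of the sublevel set vanishes as $t_1\downarrow t_0$, combined with the strict positivity $f\geq c_0$ to compare with the Lebesgue measure.

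To compare $v$ and $\hat{v}$, I would rewrite the MA equations in exponential form, $\omega_v^n = e^v (e^{-v}g)\omega^n$ and $\omega_{\hat{v}}^n = e^{\hat{v}} (c_{\hat{g}} e^{-\hat{v}}\hat{g})\omega^n$, and apply Theorem \ref{thm: GLZ Hermitian} to obtain $\|v - \hat{v}\|_\infty \leq C\|e^{-v}g - c_{\hat{g}}e^{-\hat{v}}\hat{g}\|_p^{1/n}$. Expanding the difference yields terms controlled by $\|f-g\|_p$, $|c_{\hat{g}} - 1|$, and by the Lipschitz estimate $|e^{-v}-e^{-\hat v}|\leq L|v-\hat v|$ also by $\|v - \hat{v}\|_\infty$, producing a bootstrap inequality of the form $\|v-\hat v\|_\infty \leq C(\|f-g\|_p^{1/n} + \|v-\hat v\|_\infty)^{1/n}$. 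The main obstacle will be to close this bootstrap without degrading the exponent; the key is to use the bound $\|u - \hat{v}\|_\infty \leq C\varepsilon$ from the previous step together with the uniform $L^\infty$ a priori estimate (which crucially uses $f\geq c_0>0$) to reduce $\|v - \hat v\|_\infty$ to $\|u-v\|_\infty + O(\varepsilon)$ and absorb the resulting term. Combining everything, $\|u - v\|_\infty \leq \|u - \hat{v}\|_\infty + \|\hat{v} - v\|_\infty \leq C\|f-g\|_p^{1/n}$.
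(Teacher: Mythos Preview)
There are two gaps in your proposal.

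The first is a misapplication of Lemma~\ref{lem: KN15 Lemma 3.4}: that lemma only bounds $t_1-t_0$, not $\|u-\hat v\|_\infty$. To control the full oscillation one must, as in \cite{KN19}, also define the upper threshold $\hat t_1$, apply the lemma symmetrically to bound $\hat t_0-\hat t_1$, and then invoke the Laplacian collar estimate \cite[Proposition~3.8]{KN19} (this is where $f\ge c_0>0$ genuinely enters) to bound $\hat t_1-t_1$. Your account of the role of strict positivity does not capture this step; without it your comparison of $u$ and $\hat v$ is incomplete.

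The second gap is fatal to the exponent. In comparing $v$ and $\hat v$ via Theorem~\ref{thm: GLZ Hermitian} you take $G=c_{\hat g}e^{-\hat v}\hat g$, which depends on the unknown $\hat v$; moreover the contribution $|1-c_{\hat g}|\cdot\|\hat g\|_p$ is only $O(\varepsilon)$, not $O(\varepsilon^n)=O(\|f-g\|_p)$. The resulting inequality $x\le C(x+\varepsilon)^{1/n}$ (with $x=\|v-\hat v\|_\infty$) does not close: its solution set is $[0,x^*]$ with $x^*\approx C^{n/(n-1)}$ independent of $\varepsilon$, so it yields no smallness whatsoever. Your suggested fix (``reduce $\|v-\hat v\|_\infty$ to $\|u-v\|_\infty+O(\varepsilon)$ and absorb'') is circular, since $\|u-v\|_\infty$ is precisely the quantity to be bounded.

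The paper avoids the bootstrap entirely by a different choice of auxiliary function: solve $\omega_w^n=e^{w-v}f\omega^n$. Since $\omega_v^n=e^{v-v}g\omega^n$, Theorem~\ref{thm: GLZ Hermitian} applies with $F=e^{-v}f$ and $G=e^{-v}g$, both explicit and sharing the same weight $e^{-v}$, giving directly $|w-v|\le C\|f-g\|_p^{1/n}=:\varepsilon$ with no self-reference. Then $h:=e^{w-v}f$ automatically satisfies $e^{-\varepsilon}f\le h\le e^{\varepsilon}f$, and the full scheme of \cite{KN19} (with Lemma~\ref{lem: KN15 Lemma 3.4} in place of their Lemma~3.4, plus the collar estimate) gives $|u-(w-\sup_Xw)|\le C\varepsilon$; since $|\sup_X w|\le\varepsilon$ from $|w-v|\le\varepsilon$ and $\sup_X v=0$, the conclusion follows.
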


\begin{proof} For convenience we can assume that $\int_X \omega^n =1$. 
We first assume that $u,v$ are smooth and 
\begin{equation}
\label{eq: stability reduced}
e^{-\varepsilon}f \leq g \leq e^{\varepsilon}f,
\end{equation}
for some small constant $\varepsilon>0$. Then, following the proof of \cite[Theorem A]{KN19} we obtain 
$$
|u-v| \leq C \varepsilon,
$$
for a uniform constant $C>0$. The only difference compared to \cite[Lemma 3.4]{KN19} is that we can replace $\varepsilon^{\alpha}$ by $\varepsilon$ (see Lemma \ref{lem: KN15 Lemma 3.4}).  For convenience of the reader we briefly recall the arguments of \cite{KN19}. 

We set $t_0 := \min_X(u-v)$, $\hat{t}_0:= \max_X(u-v)>t_0$. Then $t_{0}\leq 0$ and  $\hat{t}_0\geq 0$. The goal is to prove that $\hat{t}_0 -t_0 \leq C \varepsilon$. Set 
$$
t_1:= \sup \left \{t\geq t_0\  ;  \ \int_{\{u<v+t\}} f \omega^n \leq V_{\min}/2 \right \},   
$$
$$
\hat{t}_1 := \inf \left \{ t \leq \hat{t}_0 \ ; \ \int_{\{u>v+t\}} f \omega^n \leq V_{\min}/2 \right \}.
$$ 
It follows from Lemma \ref{lem: KN15 Lemma 3.4} that $t_1 \leq t_0 + C\varepsilon$. Since $\varepsilon$ is small we infer that $\int_{\{v < u-t\}} g\omega^n \leq V_{\min}$, for all $\hat{t}_1< t\leq \hat{t}_0$.  It thus follows from Lemma \ref{lem: KN15 Lemma 3.4} that $-\hat{t}_1 + \hat{t}_0 \leq C\varepsilon$. Hence it remains to prove that $\hat{t}_1 -t_1 \leq C \varepsilon$. Set $s_1:=t_1+\varepsilon$ and $\hat{s}_1:=\hat{t}_1-\varepsilon$. We prove that $\hat{s}_1 -s_1 \leq C\varepsilon$.  By definition of $t_1$ and $\hat{t}_1$ we have 
$$
\int_{\{u<v+s_1\}} f \omega^n \geq V_{\min}/2 \ ; \ \int_{\{u>v+\hat{s}_1\}} f \omega^n \geq V_{\min}/2. 
$$
We choose a uniform constant $\gamma>0$ depending on $\|f\|_p$, $p$, and $V_{\min}$ such that, for all Borel set  $E\subset X$, 
$$
\int_E f \omega^n \geq V_{\min}/2 \Longrightarrow \int_{E} \omega^n \geq \gamma.
$$
The existence and uniformity of $\gamma$ follow from the H\"older inequality. 

We now use the main novelty of \cite{KN19}: estimate of the Laplacian mass on small collars (which uses the assumption $f\geq c_0>0$). 
Define $s_0 := t_0$, $s_k:= 2^{k-1}(s_1-s_0) +s_0$, for $k\geq 2$. 

If $\int_{\{u>v +s_N \}} f\omega^n \geq V_{\min}/2$, then $\int_{\{u>v+s_N\}} \omega^n \geq \gamma$ and \cite[Proposition  3.8]{KN19} applies, giving
$$
\int_{\{s_0<u-v\leq s_N\}}   \omega_u \wedge \omega^{n-1} \geq (N-1) C c_0 \gamma^{4}. 
$$
But the left hand side is uniformly bounded by a constant depending on $(X,\omega)$ (see Lemma \ref{lem: Gauduchon}). It thus follows that for $N$ large enough we have $\int_{\{u>v+s_N\}} f\omega^n < V_{\min}/2$. By definition of $\hat{t}_1$ we have $\hat{t}_1 \leq s_N$. But $s_N-s_0 \leq 2^{N-1}C \varepsilon$, hence $\hat{s}_1-s_1\leq C\varepsilon$ as desired. The first step is completed. 

\medskip

We next assume that $u,v$ are smooth but we remove the assumption \eqref{eq: stability reduced}.  
Let $w$ be the unique smooth $\omega$-psh function such that
$$
(\omega +dd^c w)^n =e^{w-v} f \omega^n =: h \omega^n.
$$
The smoothness of $w$ was proved by Cherrier \cite{Cher87}.
Since $v$ satisfies $\omega_v^n =e^{v-v} g\omega^n$, we can apply Theorem \ref{thm: GLZ Hermitian} with $F=e^{-v} f$ and $G=e^{-v}g$ and obtain 
\begin{equation}\label{eq: GLZ hermit 2}
|w-v| \leq C_1 \|f-g\|_p^{1/n},
\end{equation}
where $C_1>0$ is a uniform constant.   

We thus have  $
e^{-\varepsilon} f \leq h \leq e^{\varepsilon} f,
$ where $\varepsilon:= C_1 \|f-g\|_p^{1/n}$. The previous step yields 
$$
|w-\sup_X w - u| \leq C_2 \varepsilon. 
$$
But from \eqref{eq: GLZ hermit 2} we see that $|\sup_X w|\leq 2\varepsilon$, hence the result follows. 

We now  treat the general case. We approximate $u,v$ as in Proposition \ref{prop: approximation}. Let $u_j,v_j$ be  smooth $\omega$-psh functions decreasing to $u,v$. Let $f_j,g_j$ be smooth functions converging to $f,g$ in $L^p$ and $f_j\geq c_0/2$.    Let $\varphi_j,\psi_j$ be smooth $\omega$-psh functions solving 
$$
(\omega+dd^c \varphi_j)^n = e^{\varphi_j-u_j} f_j\omega^n\ , \ (\omega+dd^c \psi_j)^n = e^{\psi_j-v_j} g_j \omega^n. 
$$
It follows from Proposition \ref{prop: approximation} that $\varphi_j, \psi_j$ converge uniformly to $u,v$. For $j$ large enough we have $F_j:= e^{\varphi_j-u_j} f_j \geq c_0/4$. Set $G_j:= e^{\psi_j-v_j}g_j$ and observe that $\|F_j\|_p, \|G_j\|_p$ are uniformly bounded. It thus follows from the second step that 
$$
|\varphi_j-\psi_j| \leq C\|F_j-G_j\|_p^{1/n},
$$
where $C>0$ is a uniform constant. Letting $j\to +\infty$ we arrive at the result.

\end{proof}


Using the same ideas we prove a stability estimate for the MA-constant. Recall that (see \cite{KN15Phong}, \cite{KN19}) for each $0\leq f \in L^p, p>1$ with $\int_X f \omega^n >0$ there exists a unique constant $c=c_f>0$ such that the equation $\omega_u^n =c_f f \omega^n$ has a bounded weak solution in ${\rm PSH}(X,\omega)$. 

\begin{coro}
Assume that $0\leq f,g\in L^p$ for some $p>1$. Then 
\[
|c_f -c_g|\leq C \|f-g\|_p^{1/n},
\]
 where $C>0$ is a constant depending on $(X,\omega,n,p)$, an upper  bound for $\|f\|_p, \|g\|_p$, and a positive lower bound for $\|f\|_{1/n}$, $\|g\|_{1/n}$. 
\end{coro}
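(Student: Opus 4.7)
The plan is to adapt the perturbation strategy of Theorem~\ref{thm: GLZ Hermitian proof} directly to the Monge-Amp\`ere-admissible solutions. Let $\varphi_f$ and $\varphi_g$ be the unique continuous $\omega$-psh functions satisfying $(\omega+dd^c\varphi_f)^n=c_f f\omega^n$ and $(\omega+dd^c\varphi_g)^n=c_g g\omega^n$, normalized by $\sup_X\varphi_f=\sup_X\varphi_g=0$. By the Gauduchon argument reproduced in the proof of Theorem~\ref{thm: GLZ Hermitian proof}, both $c_f,c_g$ and $\|\varphi_f\|_\infty,\|\varphi_g\|_\infty$ are uniformly bounded. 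Following \cite{GLZAIF}, I set $h:=|f-g|/\|f-g\|_p+1$ (assuming $\|f-g\|_p>0$, else $c_f=c_g$ by uniqueness) and take $\rho$ with $\sup_X\rho=0$ solving $(\omega+dd^c\rho)^n=c_h h\omega^n$; the constant $c_h$ and $\|\rho\|_\infty$ are also uniformly bounded.

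For a small $\varepsilon>0$, I set $\psi:=(1-\varepsilon)\varphi_f+\varepsilon\rho$. The mixed Monge-Amp\`ere inequality \cite[Lemma 1.9]{Ng16AIM} gives the pointwise bound $(\omega_\psi^n)^{1/n}\geq (1-\varepsilon)(c_f f)^{1/n}+\varepsilon (c_h h)^{1/n}$. Using the subadditivity $g^{1/n}\leq f^{1/n}+|f-g|^{1/n}$ together with $h^{1/n}\geq |f-g|^{1/n}/\|f-g\|_p^{1/n}$, I choose a uniform constant $C_0$ and set $\varepsilon:=C_0\|f-g\|_p^{1/n}$ so that the perturbation term $\varepsilon(c_h h)^{1/n}$ absorbs the discrepancy $(1-\varepsilon)c_f^{1/n}(g^{1/n}-f^{1/n})_+$. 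This yields $(\omega_\psi^n)^{1/n}\geq (1-\varepsilon)c_f^{1/n}g^{1/n}$ and, raising to the $n$-th power, $\omega_\psi^n\geq c'g\omega^n$ with $c':=(1-\varepsilon)^n c_f$. The case $\varepsilon\geq 1/2$ is already trivial since then $\|f-g\|_p^{1/n}$ is bounded below and $|c_f-c_g|\leq c_f+c_g\leq C\|f-g\|_p^{1/n}$ is automatic.

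To extract the comparison $c'\leq c_g$ from this subsolution bound, I apply Proposition~\ref{prop: sub and super solutions} with $u:=\psi$ and $v:=\varphi_g+M$, choosing $M:=\sup_X(\psi-\varphi_g)-\log(c'/c_g)$. This choice of $M$ ensures $\omega_\psi^n\geq (c'/c_g)\omega_v^n\geq e^{\psi-v}\omega_v^n$ pointwise, so the Proposition forces $\psi\leq v$; taking the supremum of $\psi-\varphi_g$ on the left then gives $\log(c'/c_g)\leq 0$, i.e., $(1-\varepsilon)^n c_f\leq c_g$. Hence $c_f-c_g\leq n\varepsilon c_f\leq C\|f-g\|_p^{1/n}$, and swapping $f$ and $g$ yields the reverse bound. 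The main delicate step is the pointwise mixed Monge-Amp\`ere computation with the balanced choice of $\varepsilon$; the $1/n$ exponent arises inevitably from raising the subadditive estimate $g^{1/n}-f^{1/n}\leq|f-g|^{1/n}$ to the $n$-th power, exactly as in \cite{GLZAIF} and Theorem~\ref{thm: GLZ Hermitian proof}.
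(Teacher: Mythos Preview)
Your argument is correct, and takes a somewhat different route from the paper's. The paper proceeds by first solving an auxiliary equation $\omega_v^n = e^{v-u}c_f g\omega^n$ (where $\omega_u^n=c_f f\omega^n$), then invoking Theorem~\ref{thm: GLZ Hermitian} as a black box to bound $|v-u|$, which yields the two-sided inequality $(1\pm C\|f-g\|_p^{1/n})c_f g\omega^n$ for $\omega_v^n$; finally \cite[Lemma 2.1]{KN19} converts this into $|c_f-c_g|\leq C\|f-g\|_p^{1/n}$. You instead bypass Theorem~\ref{thm: GLZ Hermitian} and re-run the perturbation argument directly: the convex combination $\psi=(1-\varepsilon)\varphi_f+\varepsilon\rho$ with the mixed Monge--Amp\`ere inequality gives a one-sided subsolution bound $\omega_\psi^n\geq (1-\varepsilon)^n c_f\, g\omega^n$, and your application of Proposition~\ref{prop: sub and super solutions} with the calibrated shift $M$ is a neat way to extract $(1-\varepsilon)^n c_f\leq c_g$ from this (it effectively reproves one direction of \cite[Lemma 2.1]{KN19}). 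Your approach is more self-contained in that it does not pass through the auxiliary elliptic equation; the paper's approach, on the other hand, packages the perturbation step inside Theorem~\ref{thm: GLZ Hermitian} and gets a cleaner two-sided estimate in one shot. Both yield the same exponent $1/n$ for the same reason.
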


\begin{proof}
Let $u$ be a continuous $\omega$-psh function on $X$, normalized by $\sup_X u=0$, such that $(\omega+dd^c u)^n =c_f f\omega^n$. By Lemma \ref{lem: Gauduchon} and the mixed Monge-Amp\`ere inequality \cite[Lemma 1.9]{Ng16AIM} we have that
\[
(\omega +dd^c u) \wedge e^G \omega^{n-1} \geq c_f^{1/n} f^{1/n} e^G \omega^{n},
\]
where $G$ is a smooth function such that $dd^c(e^G \omega^{n-1}) =0$ (see \cite{Gau77}). Integrating on $X$ we see that $c_f$ is uniformly bounded from above. Proposition \ref{prop: control mass from below} then ensures that $c_f$ is uniformly bounded from below. It follows from \cite[Theorem 0.1]{Ng16AIM} that there exists a unique continuous $\omega$-psh function $v$ such that 
$$
(\omega+dd^c v)^n =e^{v-u} c_f g \omega^n. 
$$
Theorem \ref{thm: GLZ Hermitian} yields $|v-u| \leq C_1c_f \|f-g\|_p^{1/n}$, for a uniform constant $C_1$, hence
\[
\left(1-C_2 \|f-g\|_p^{1/n}\right) c_f g\omega^n  \leq (\omega +dd^c v)^n \leq \left(1+ C_2  \|f-g\|_p^{1/n}\right)  c_f g\omega^n,
\]
for some uniform constant $C_2$.  It thus follows from \cite[Lemma 2.1]{KN19} that 
\[
\left(1-C_2 \|f-g\|_p^{1/n}\right) c_f  \leq c_g \leq \left(1+ C_2  \|f-g\|_p^{1/n}\right) c_f,
\]
yielding
$$
|c_f -c_g| \leq C_2 \|f-g\|_p^{1/n},
$$ 
and concluding the proof.
\end{proof}

\subsection{The case of big cohomology classes on K\"ahler manifolds}

Using the idea of the proof of Theorem \ref{thm: GLZ Hermitian} we can also improve \cite[Theorem C]{GZ12MRL}. 
We first recall a few known facts on pluripotential theory in big cohomology classes. We refer the reader to \cite{BEGZ10,BBGZ13,DDL1,DDL2,DDL3,DDL4,DDL5} for more details. 

We assume (only in this section) that $\omega$ is K\"ahler (i.e. $d\omega =0$). Fix a closed smooth real $(1,1)$-form $\theta$.  A function $u : X \rightarrow \mathbb{R} \cup \{-\infty\}$ is $\theta$-psh if it is quasi-psh and $\theta +dd^c u\geq 0$ in the sense of currents. We let $\psh(X,\theta)$ denote the set of all $\theta$-psh functions which are not identically $-\infty$. By elementary properties of psh functions one has $\psh(X,\theta)\subset L^{1}(X)$. Here, if nothing is stated, $L^1(X)$ is $L^1(X,\omega^n)$.  The De Rham cohomology class $\{\theta\}$ is big if $\psh(X,\theta-\varepsilon \omega)$ is non-empty for some $\varepsilon>0$. 

We let $V_{\theta}$ denote the envelope: 
\[
V_{\theta} : = \sup \{u \in \psh(X,\theta) \setdef u \leq 0\}.
\]
There is a Zariski open set $\Omega$, called the ample locus of $\{\theta\}$, on which $V_{\theta}$ is locally bounded. A $\theta$-psh function $u$ has minimal singularities if $u-V_{\theta}$ is globally bounded on $X$. For a $\theta$-psh function $u$ with minimal singularities the operator $(\theta +dd^c u)^n$ is well-defined as a positive Borel measure on $\Omega$. One extends this measure trivially over $X$. The total mass of the resulting measure depends only on the cohomology class of $\theta$ and is called the volume of $\theta$, denoted by $\vol(\theta)$ (see \cite{Bou04}, \cite{BEGZ10}).  Given a $\theta$-psh function $u$,  the non-pluripolar Monge-Amp\`ere measure of $u$ is defined by 
\[
(\theta +dd^c u)^n := \lim_{j\to +\infty} {\bf 1}_{\{u>V_{\theta}-j\}} (\theta +dd^c \max(u,V_{\theta}-j))^n,
\]
where the sequence of measures on the right-hand side is increasing in $j$.  Note that $\int_X (\theta +dd^c u)^n \leq \vol(\theta)$ and the equality holds if and only if $u\in \mathcal{E}(X,\theta)$. 

It was proved in \cite{BEGZ10} that for all $L^p$-density ($p>1$), $0\leq f$ with $\int_X f \omega^n = \vol(\theta)$, there exists a unique $\theta$-psh function with minimal singularities $u$ such that $\sup_X u=0$ and $\theta_u^n = f\omega^n$.  

We assume throughout this section the following normalization: 
\begin{equation}
	\label{eq: normalization}
	\int_X \omega^n = \vol(\theta) =1. 
\end{equation}

\begin{theorem} \label{thm: improvement of GZ12}
	Under the above setting, assume that $0\leq f,g \in L^p(X,\omega^n)$, $p>1$.  If $u,v$ are $\theta$-psh functions with minimal singularities on $X$ such that 
	$$
	\theta_u^n = f\omega^n, \ \theta_v^n = g\omega^n, \ \sup_X u= \sup_X v=0,
	$$
	then for some constant $C>0$ depending on $(X,\omega,n, \theta, p)$ and an upper bound for $\|f\|_p, \|g\|_p$ we have 
	$$
	\sup_X |u-v|\leq C \|f-g\|_p^{1/n}.
	$$
\end{theorem}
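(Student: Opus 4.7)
The plan is to run the Hermitian strategy of this paper in the big cohomology class setting, replacing the modified comparison principle by the genuine comparison principle available in the Kähler case and replacing $L^\infty$-boundedness of potentials by boundedness modulo $V_\theta$. The key external input is \cite[Theorem A]{GLZAIF}, an elliptic stability result with exponent $1/n$ for equations of the form $\theta_u^n = e^u F \omega^n$ with $F \in L^p$.

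First, I would reduce to multiplicatively close densities, exactly as in the proof of Theorem \ref{thm: GLZ Hermitian proof}. Let $w$ be the $\theta$-psh function with minimal singularities solving $\theta_w^n = e^{w-v} f \omega^n$. Since $v$ itself satisfies $\theta_v^n = e^{v-v} g \omega^n$, applying \cite[Theorem A]{GLZAIF} to this pair yields $|w - v| \leq C_1 \|f - g\|_p^{1/n} =: \varepsilon$. Consequently $h := e^{w-v} f$ satisfies $e^{-\varepsilon} f \leq h \leq e^{\varepsilon} f$, and the original problem is reduced to comparing $u$ (solving $\theta_u^n = f \omega^n$) with $w' := w - \sup_X w$ (solving a Monge-Ampère equation with density multiplicatively $e^{\pm\varepsilon}$-close to $f$), using that $|\sup_X w| \leq 2\varepsilon$ comes for free from the first step.

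Second, I would invoke Kolodziej's classical stability argument from \cite[Theorem 4.1]{Kolodziej_2003_IUMJ}, transplanted to the big class setting. The ingredients are the comparison principle for non-pluripolar Monge-Ampère products of $\theta$-psh functions with minimal singularities (available since $\omega$ is Kähler, by \cite{BEGZ10}), capacity estimates on sublevel sets $\{u < w' + t\}$ intersected with the ample locus $\Omega$ of $\{\theta\}$, and a perturbation using $V_\theta$ in place of the constant perturbation used in the Kähler case. These deliver $\|u - w'\|_\infty \leq C_2 \varepsilon$, which combined with step one yields the stated bound $\sup_X |u - v| \leq C \|f - g\|_p^{1/n}$.

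The main obstacle is the second step. Because $u, v, w$ are only bounded modulo $V_\theta$, Kolodziej's capacity-and-comparison machinery must be run on $\Omega$, and one must verify that no mass of the non-pluripolar Monge-Ampère measure escapes near the non-ample locus when taking intersections of sublevel sets. The normalization $\int_X f \omega^n = \int_X g \omega^n = \vol(\theta)$ is precisely what ensures that $u, v \in \mathcal{E}(X, \theta)$ and rules out any such loss of mass, so once this localization is set up, the remaining estimates are essentially a routine adaptation of the arguments familiar from the Kähler case.
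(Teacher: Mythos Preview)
Your two-step reduction matches the paper's strategy, and Step~1 is essentially what the paper does (though note that \cite[Theorem~A]{GLZAIF} may not be stated in the generality of big classes; the paper re-proves the needed elliptic estimate directly in that setting, using an auxiliary $\rho$ solving a normalized equation and the comparison principle from \cite{DDL2}). So Step~1 is fine modulo that check.

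Step~2 is where your description diverges from the paper, and where I see a genuine gap. You invoke ``Ko{\l}odziej's capacity machinery'' and ``capacity estimates on sublevel sets'' to pass from $e^{-\varepsilon}f\le h\le e^{\varepsilon}f$ to $\|u-w'\|_\infty\le C_2\varepsilon$. But the capacity-based route of \cite{Kolodziej_2003_IUMJ} and \cite{GZ12MRL} does not, as stated, give a \emph{linear} bound in $\varepsilon$: feeding $\|f-h\|_p\le C\varepsilon$ into those arguments produces $\|u-w'\|_\infty\le C\varepsilon^{\gamma}$ with $\gamma$ strictly less than $1$ (e.g.\ $\gamma=\frac{1}{2^n(n+1)-1}$ in \cite{GZ12MRL}), which would destroy the final exponent $1/n$. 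The paper avoids capacity entirely here. Instead it builds an auxiliary $\rho$ with minimal singularities solving $\theta_\rho^n = 2h\,{\bf 1}_{\{w<u\}}\omega^n + b_1\omega^n$, uses the mixed Monge--Amp\`ere inequality to show that $\theta_{(1-\delta)u+\delta\rho}^n \ge (1+\gamma)h\,\omega^n$ on $\{w<u\}$ with $\delta\sim\varepsilon$ and $\gamma>0$ a fixed gain, applies the comparison principle \cite[Corollary~2.3]{BEGZ10} to force $\int_V h\,\omega^n=0$ on the shrunk sublevel set $V$, and concludes by the domination principle \cite[Corollary~3.10]{DDL2}. This exploits the multiplicative closeness $2^{-\delta}h\le f\le 2^{\delta}h$ in a sharper way than a capacity argument and is what actually delivers the linear-in-$\varepsilon$ bound. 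Your perturbation ``using $V_\theta$'' does not capture this mechanism; you should replace your Step~2 by this auxiliary-function-plus-domination argument.
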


In the proof below we let $C_1,C_2,...$ denote various uniform constants. 
\begin{proof}
By \cite[Theorem 4.1]{BEGZ10},  for some uniform constant $C_1>0$, we have 
\[
u \geq V_{\theta} -C_1 \; \text{and}\; v \geq V_{\theta} -C_1. 
\]
From this we get $v-C_1\leq u\leq v+ C_1$.  

By the uniform version of Skoda's integrability theorem, see \cite[Theorem 8.11]{GZbook}, \cite{Zer01}, and the H\"older inequality, there exists a small positive constant $a>0$ such that $\int_X e^{-a\varphi} f\omega^n<+\infty$ for all $\varphi\in \psh(X,\theta)$.  By \cite[Theorem 5.3]{DDL4} there exists a unique $w\in \mathcal{E}(X,\theta)$ such that 
\begin{equation}
	\label{eq: MA big}
	(\theta +dd^c w)^n = e^{a(w-v)} f\omega^n=:h\omega^{n}.
\end{equation}
Observe that $u-C_1$ (respectively $u-C_1$) is a subsolution (respectively supersolution) to the above equation, hence by  the comparison principle (see e.g. \cite[Lemma 4.24]{DDL2})  we have $u-C_1\leq w\leq u+C_1$. 

We claim that $|w-v|\leq A_1\|f-g\|_p^{1/n}$, for some uniform constant $A_1>0$. 

We set $\varepsilon := e^{2aC_1/n}\|f-g\|_p^{1/n}$ and consider two cases. If $\varepsilon>1/2$ then, by choosing $A_1= 4C_1 e^{2aC_1/n}$ we have 
\[
|w-v| \leq 2C_1\leq A_1 \|f-g\|_p^{1/n}. 
\]

Assume that $\varepsilon\leq 1/2$.   By the H\"older inequality and the normalization \eqref{eq: normalization} we can find a constant $b\geq 0$ such that 
\[
\int_X \left ( \frac{|f-g|}{\|f-g\|_p}  +b\right) \omega^n  =  \vol(\theta). 
\]
 Let $\rho\in \psh(X,\theta)$ be the unique solution with minimal singularities to 
\[
(\theta +dd^c \rho)^n = \left(\frac{|f-g|}{\|f-g\|_p} + b\right) \omega^n, \ \sup_X \rho =0.
\]

It follows from \cite[Theorem 4.1]{BEGZ10} that $\rho \geq V_{\theta}-C_3$, hence $|\rho-w|\leq C_4$. We now show that for a suitable choice of $B>0$, the function $\varphi := (1-\varepsilon) w + \varepsilon \rho -B\varepsilon$ is a subsolution to \eqref{eq: MA big}. Indeed, 
\[
\theta_{\varphi}^n \geq (1-\varepsilon)^n e^{a(w-v)} f \omega^n + e^{2aC_1}|f-g| \omega^n \geq e^{a(w-v) + n \log (1-\varepsilon) } g\omega^n.  
\]
For $B$ large enough (depending on $C_4,a$) we have that $a\varphi \leq a w + n\log (1-\varepsilon)$, hence $\varphi$ is a subsolution to $(\theta+dd^c \phi)^n =e^{a(\phi-v)} g\omega^n$.  
We thus have, by the comparison principle, that $\varphi\leq v$. Exchanging the role of $v$ and $w$ we finish the proof of the claim. 
 
We next prove that $|w-u|\leq A_2\|f-g\|_p^{1/n}$, for some uniform constant $A_2>0$. 
Since $|w-v|\leq A_1\|f-g\|_p^{1/n}$ and $\sup_X v=0$ it follows that $|\sup_X w | \leq A_1\|f-g\|_p^{1/n}$. It thus suffices to prove that ${\rm osc}_X (w-u) \leq A_2\|f-g\|_p^{1/n}$.   Replacing $w$ by $w+c$, for some constant $c$, we can assume that 
$$
\sup_X (w-u) = \sup_X (u-w) =:s\geq 0. 
$$
It is then enough to prove that $s\leq A_2 \|f-g\|_p^{1/n}$.  To do this we can assume that 
$$
2\int_{\{w<u\}} h\omega^n \leq \int_X h \omega^n
$$
(otherwise we change the role of $w$ and $u$).  Note that 
$$
\theta_w^n = h \omega^n \ ; \ \theta_u^n = f \omega^n, \ 
2^{-\delta} h \leq f \leq 2^{\delta} h,
$$
where $\delta = C_5 \|f-g\|_p^{1/n}$. Now, it suffices to prove that $u\leq w + A_2 \|f-g\|_p^{1/n}$. 
Set $U:= \{w<u\}$. 
Let $\rho$ be the unique $\theta$-psh function with minimal singularities such that 
$$
\theta_{\rho}^n = 2   h {\bf 1}_U \omega^n + b_1 \omega^n,\ \sup_X \rho =0,
$$
where $b_1\geq  0$ is a normalization constant. It follows from \cite[Theorem 4.1]{BEGZ10} that  $|\rho -u| \leq C_6$, hence
$$
V := \{w < (1-\delta) u + \delta (\rho -C_6) \} \subset U. 
$$
On $U$, the Monge-Amp\`ere measure  $\theta_{(1-\delta)u + \delta \rho}^n$  can be estimated as follows, using the mixed Monge-Amp\`ere inequalities (see \cite{BEGZ10}, \cite{Diw09Z}), 
\begin{flalign*}
\theta_{(1-\delta) u + \delta \rho}^n &\geq  \left ( (1-\delta)  f^{1/n} + \delta   (2h)^{1/n}\right )^n \omega^n\\
& \geq \left((1-\delta) 2^{-\delta/n} + 2^{1/n} \delta\right)^n h \omega^n. 
\end{flalign*}
Using the inequality $2^x =e^{x\log 2} \geq 1+ x\log 2$ we have, for $\delta \in (0,1)$, 
\begin{align*}
(1-\delta)2^{-\delta/n} + 2^{1/n} \delta &\geq (1-\delta)\left(1-\frac{\delta\log 2}{n} \right)+ \left ( 1+ \frac{\log 2}{n}\right) \delta 	\\
& =1+ \frac{\delta^2 \log 2}{n}=: 1+\gamma. 
\end{align*}
 We thus have
$$
\theta_{(1-\delta) u + \delta \rho}^n \geq (1+\gamma)h \omega^n.
$$
 The comparison principle, see \cite[Corollary 2.3]{BEGZ10}, gives 
$$
\int_{V} (1+\gamma) h \omega^n \leq  \int_{V} \theta_{(1-\delta) u + \delta \rho}^n \leq \int_V \theta_{w}^n = \int_V h \omega^n, 
$$
hence $\int_V h \omega^n =0$. Using the domination principle, see \cite[Corollary 3.10]{DDL2}, we then infer  $w \geq (1-\delta) u + \delta (\rho -C_6)$, hence $w-u \geq -C_6\|f-g\|_p^{1/n}$ which completes the proof.    
\end{proof}

\begin{rema}
In the Hermitian setting, if the comparison principle holds (which implies certain geometric conditions on $X$, see \cite{Chio16}), then the above proof can be applied. 
\end{rema}

\section{H\"older continuity}\label{sect: Holder continuity}

\subsection{H\"older regularity of solutions}

\begin{theorem}\label{thm: Holder exponent proof}
Let $(X,\omega)$ be a compact Hermitian manifold of dimension $n$. Fix  $0\leq f \in L^p(X), p>1$ with $\int_X f \omega^n >0$. Then any solution $u$ to $\omega_u^n = f \omega^n$ is H\"older continuous with H\"older exponent in $(0, p_n)$, where $p_n= 2/ (nq + 1)$.
\end{theorem}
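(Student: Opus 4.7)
The plan is to follow the blueprint of \cite{KN19,DDGKPZ14}, substituting Theorem \ref{thm: GLZ Hermitian proof} for the strict positivity hypothesis $f\geq c_0>0$ used in \cite{KN19}. The starting point is the classical observation that $u$ is H\"older of exponent $\alpha$ provided $\sup_X(u_\delta-u)\leq C\delta^\alpha$ for small $\delta$, where $u_\delta$ denotes a Kiselman-type sup-convolution at scale $\delta$ performed in a fixed finite atlas. It therefore suffices to establish this bound for every $\alpha<p_n$.

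First I would carry out the regularization step. In the Hermitian setting the torsion of $\omega$ prevents $u_\delta$ from being $\omega$-psh; one only has $\omega+dd^c u_\delta\geq -A\delta\,\omega$ in the sense of currents. A standard convex combination with a fixed bounded $\omega$-psh function yields a genuinely $\omega$-psh function $\hat u_\delta$ satisfying $\|u_\delta-\hat u_\delta\|_\infty=O(\delta)$. Using H\"older's inequality on spherical means of $f$, one then obtains
\[
\omega_{\hat u_\delta}^n\leq (1+C\delta^2)\,\tilde f_\delta\,\omega^n,
\]
for a suitable averaged density $\tilde f_\delta$ with $\|\tilde f_\delta-f\|_p\leq C\delta^{2/q}$.

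The core step is to apply Theorem \ref{thm: GLZ Hermitian proof} in place of the stability arguments used in \cite{KN19}. Let $w_\delta$ be the unique bounded $\omega$-psh solution of $\omega_{w_\delta}^n=e^{w_\delta-u}\tilde f_\delta\,\omega^n$ granted by \cite{KN15Phong}. Since $u$ is bounded, the densities $e^{-u}\tilde f_\delta$ and $e^{-u}f$ have uniformly controlled $L^p$-norms, and Proposition \ref{prop: control mass from below} supplies uniform positive lower bounds on their $L^{1/n}$-means. Theorem \ref{thm: GLZ Hermitian proof} then yields
\[
\|w_\delta-u\|_\infty\leq C\,\|\tilde f_\delta-f\|_p^{1/n}\leq C\,\delta^{2/(nq)}.
\]
Combining with Proposition \ref{prop: sub and super solutions} applied to $\hat u_\delta$ (after subtracting a small constant to absorb the factor $1+C\delta^2$), I would conclude that $\sup_X(\hat u_\delta-w_\delta)\leq C\delta^2$, and hence $\sup_X(u_\delta-u)$ is controlled by a positive power of $\delta$ depending only on $n$ and $q$.

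The last task is to balance all contributions. Following \cite[\S4]{DDGKPZ14}, iterating the above inequality against the sup-convolution parameter produces any H\"older exponent $\alpha<p_n=2/(nq+1)$. The gain of a factor $n$ relative to \cite[Theorem B]{KN19} reflects precisely the improvement from their stability power to the optimal $1/n$ provided by Theorem \ref{thm: GLZ Hermitian proof}, while the removal of the condition $f\geq c_0>0$ is possible because the new stability requires no positive lower bound on the density. The main obstacle is purely bookkeeping: one must verify uniformly in $\delta$ the hypotheses of Theorem \ref{thm: GLZ Hermitian proof}, notably the positive lower bound on $\int_X(e^{-u}\tilde f_\delta)^{1/n}\omega^n$, and track the Hermitian torsion defect carefully enough that it never dominates the gain $\delta^{2/(nq)}$ coming from the stability step.
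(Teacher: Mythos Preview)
There is a genuine gap at the heart of your argument: the displayed inequality
\[
\omega_{\hat u_\delta}^n\leq (1+C\delta^2)\,\tilde f_\delta\,\omega^n,\qquad \|\tilde f_\delta-f\|_p\leq C\delta^{2/q},
\]
is not available. The Monge--Amp\`ere operator is fully nonlinear and does not commute with convolution or sup-convolution; regularizing $u$ produces a function whose Monge--Amp\`ere measure has, in general, no pointwise or $L^p$ relation to any averaged version of $f$. (At best one knows a lower bound for $(dd^c u_\delta)^n$ via concavity of $\det^{1/n}$, which goes the wrong way.) No such estimate appears in \cite{KN19} or \cite{DDGKPZ14}, and ``H\"older's inequality on spherical means of $f$'' does not produce it. If it were true you would in fact get H\"older exponent $2/(nq)>p_n$ directly, with no iteration, which already signals that something is off.

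The paper's proof avoids this entirely. The only analytic input is the \emph{linear} estimate $\int_X(\rho_t u-u)\,\omega^n\leq Ct^2$ from \cite{DDGKPZ14}. One fixes $\alpha<p_n$, takes the Kiselman--Legendre transform $U_\delta$ with parameter $c=\delta^\alpha$ (so that $\omega+dd^cU_\delta\geq -A\delta^\alpha\omega$, not $-A\delta\,\omega$), and uses Chebyshev plus H\"older to bound the mass $\int_{E(\delta)}f\omega^n\leq C\delta^{(2-\alpha)/q}$ of the exceptional set $E(\delta)=\{\rho_\delta u-u>Ab\delta^\alpha\}$. One then solves $\omega_v^n=e^{v-u}f\,{\bf 1}_{X\setminus E(\delta)}\omega^n$ and applies Theorem~\ref{thm: GLZ Hermitian proof} to the densities $e^{-u}f$ and $e^{-u}f{\bf 1}_{X\setminus E(\delta)}$, obtaining $|v-u|\leq C\delta^{(2-\alpha)/(q(n+\varepsilon))}$. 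Since $\alpha<p_n$ one can choose $\varepsilon$ so that this exponent exceeds $\alpha$. Finally, because $\omega_v^n=0<\omega_{u_\delta}^n$ on $E(\delta)$, the minimum principle (Proposition~\ref{prop: min principle}) forces $\{ \rho_{s\delta}u-u>5Ab\delta^\alpha\}=\emptyset$. There is no iteration; the exponent $p_n=2/(nq+1)$ arises from the self-consistency condition $(2-\alpha)/(nq)>\alpha$ built into the choice $c=\delta^\alpha$.
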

We note here that bounded solutions to \eqref{eq: MA} are automatically continuous. Indeed, let $u$ be a bounded solution and $v$ be a continuous $\omega$-psh function such that $\omega_v^n =e^{v-u} f \omega^n$. The existence of $v$ follows from \cite{Ng16AIM}. By uniqueness $v=u$, hence $u$ is continuous. 
\begin{proof}
Assume that $u$ is a bounded $\omega$-psh function solving 
$$
(\omega +dd^c u)^n = f\omega^n.
$$
By adding a constant to $u$ we can assume that $\inf_X u=1$ and set $b:= 2\sup_X u$. 
We will use the same notations as in \cite[Section 4]{KN19}. Fix $\alpha \in (0,p_n)$.  We prove that $u$ is H\"older continuous with exponent $\alpha$ by showing that $\rho_t u -u \leq c t^{\alpha}$, for $t$ small enough (see \cite[page 632]{DDGKPZ14}, \cite[Lemma 4.2]{GKZ08} or Lemma \ref{lem: GKZ08} below). Here, following Demailly \cite{Dem92}, $\rho_{t}(u)$ is defined by
\begin{equation}\label{eq: Demailly}
\rho_t(u)(z) := \frac{1}{t^{2n}} \int_{T_zX}  u({\rm exph}_z (\zeta) ) \rho \left ( \frac{\|\zeta\|_{\omega}^2}{t^2} \right ) dV_{\omega}(\zeta),
\end{equation}
where $\zeta \mapsto {\rm exph}_z(\zeta)$ is the (formal) holomorphic part of the Taylor expansion of the exponential map of the Chern connection on the tangent bundle of $X$ associated to $\omega$, and $\rho$ is a smoothing kernel defined by 
$$
\rho(t) :=    \begin{cases} \frac{\eta}{(1-t)^2} {\rm exp} \left ( \frac{1}{t-1} \right ) , \ \text{if}\ t \in [0,1], \\
0 , \ \text{if} \ t>1, \end{cases}
$$
where $\eta>0$ is a constant such that $\int_{\mathbb{C}^n}\rho(\|z\|^2) dV(z) =1$. Here $dV$ is the Lebesgue measure on $\mathbb{C}^n$. 

Following  \cite{Dem92} and \cite{KN19},  we define the  Kiselman-Legendre transform: 
\begin{equation}\label{eq: KL}
U_{\delta,c}:= \inf_{t\in [0,\delta]} \left (\rho_t(u) + K(t^2-\delta^2) +K(t-\delta) - c \log (t/\delta) \right), 
\end{equation}
where $c>0, \delta>0$, and $K$ is a positive (curvature) constant and as in \cite{Dem92} we choose $K$ to ensure that $t \mapsto \rho_t(u) + K t^2$ is increasing in $t$. 
In the following arguments we choose $c= \delta^{\alpha}$ and we write $U_{\delta}$ instead of $U_{\delta,c}$.  It follows from \cite[Lemma 4.1]{KN19} that 
$$
\omega + dd^c U_{\delta} \geq -A \delta^{\alpha} \omega,
$$
where $A>0$ is a uniform curvature constant. Setting
$$
u_{\delta} := \frac{1}{1+2A\delta^{\alpha}} U_{\delta}, 
$$
we then have $\omega +dd^c u_{\delta} \geq \gamma \omega$, for some positive constant $\gamma$. Note that by construction and by the choice of $K$, we have 
$$
\rho_{\delta}(u) + K\delta^2 \geq u, \  \text{and}\ \rho_{\delta}(u) \geq U_{\delta}. 
$$
Set $s := e^{-5Ab}$ and 
$$
E(\delta) := \{\rho_{\delta}(u) -u > Ab \delta^{\alpha}\}, \ F(\delta):= \{\rho_{s\delta} u \geq u + 5 Ab \delta^{\alpha}\}.
$$
Up to decreasing $\delta$ we can assume that $2K \delta \leq Ab \delta^{\alpha}$.  We claim that on $F(\delta)$ we have $U_{\delta} -u \geq 4 Ab \delta^{\alpha}$.  Indeed, since $t\mapsto \rho_t u + Kt^2$ is increasing and $s$ is small, we have 
$$
\rho_t(u) + K(t^2-\delta^2) +K(t-\delta) - \delta^{\alpha} \log (t/\delta) \geq  u - 2K  \delta + 5Ab\delta^{\alpha},  \ \forall t \in [0,s\delta],
$$
and 
$$
\rho_t(u) + K(t^2-\delta^2) +K(t-\delta) - \delta^{\alpha} \log (t/\delta) \geq  \rho_{s\delta} u - 2K  \delta,  \ \forall t \in [s\delta,\delta].
$$
It thus follows that on $F(\delta)$ we have $U_{\delta} \geq u + 4Ab \delta^{\alpha}$, as claimed.

Now we prove that the set $F(\delta)$ is empty for $\delta>0$ small enough. It follows from \cite[eq (4.9)]{KN19} (which is a lemma in \cite{DDGKPZ14}) that  
$$
\int_X (\rho_t u - u) \omega^n \leq C t^2. 
$$
Hence 
$$
\int_{E(\delta)} \omega^n \leq \dfrac{C}{Ab} \delta^{2-\alpha},
$$
and an application of the H\"older inequality yields
$$
\int_{E(\delta)} f\omega^n \leq   C_1 \delta^{\beta},
$$
where  $\beta:=(2-\alpha)/q$, and $q$ is the conjugate of $p$. 

 We let $v$ be the unique continuous $\omega$-psh function such that 
$$
(\omega +dd^c v)^n = e^{v-u} f {\bf 1}_{X\setminus E(\delta)} \omega^n.
$$
Theorem \ref{thm: GLZ Hermitian} yields, for each $\varepsilon>0$, $|v-u| \leq C_3 \delta^{\beta/(n+\varepsilon)}$, where $C_3$ depends also on $\varepsilon$.  Since $\alpha<p_n$ we can choose $\varepsilon >0$ so small that $\beta/(n+\varepsilon) > \alpha$.  Decreasing $\delta$ we can ensure that $|v-u|\leq Ab \delta^{\alpha}/2$. The choice of $b$ ensures that 
$$
|u_{\delta}-U_{\delta}| \leq \frac{Ab\delta^{\alpha}}{2}. 
$$
Assume by contradiction that $F(\delta)\neq \emptyset$. On $F(\delta)$ we have 
$$
u_{\delta} - v= u_{\delta}-U_{\delta} + U_{\delta}-u  + u-v \geq 3Ab\delta^{\alpha},
$$
while on $X\setminus E(\delta)$, we have 
$$
u_{\delta} -v = u_{\delta} - U_{\delta} + U_{\delta} - \rho_{\delta}u +\rho_{\delta} u - u \leq 2Ab \delta^{\alpha}.
$$
It thus follows that $u_{\delta}-v$ attains its maximum over $X$ at some point $z_0\in E(\delta)$, contradicting the minimum principle (Proposition \ref{prop: min principle}) since $\omega_v^n =0< \omega_{u_{\delta}}^n$ on $E(\delta)$. Hence, for $\delta$ small enough,  $F(\delta)$ is empty. This completes the proof.
\end{proof}

\subsection{H\"older regularity of plurisubharmonic envelopes}
For a continuous function  $f: X\rightarrow  \mathbb{R}$ we define its $\omega$-psh envelope by:
\[
P_{\omega}(f):= (\sup \{\phi \ | \  \phi\in \psh(X,\omega) \text{ and } \phi\leq f  \})^*.
\]
Is was proved in \cite{Tos18MRL} (for the K\"ahler case) and in \cite{CZ19China} (for the Hermitian case) that $P_{\omega}(f)$ belongs to $C^{1,1}(X)$ if $f$ is smooth.   If $f$ is (Lipschitz) continuous  then $P_\omega(f)$ is also (Lipschitz) continuous \cite{CZ19China}.  In this section we prove that $P_\omega(f)$ is H\"older continuous provided that $f$ is H\"older continuous. 
\begin{lemma}
If $f\in C^{0}(X)$ then $P_\omega(f) \in C^{0}(X) $.
\end{lemma}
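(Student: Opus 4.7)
The plan is to realize $P_\omega(f)$ as a uniform limit of continuous (in fact $C^{1,1}$) functions, so that continuity follows immediately. Since $X$ is compact and $f$ is continuous, $f$ is bounded; as constants are $\omega$-psh, the class $\{\phi\in\psh(X,\omega) : \phi\leq f\}$ contains the constant $\inf_X f$ and is non-empty. Hence $P_\omega(f)$ is a well-defined, bounded, $\omega$-psh function, and because $f$ is upper semicontinuous one has $P_\omega(f)\leq f$ pointwise on $X$.

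The key ingredient is the following contraction property: for any $g,h\in C^0(X)$,
\[
\|P_\omega(g)-P_\omega(h)\|_{L^\infty(X)} \leq \|g-h\|_{L^\infty(X)}.
\]
Indeed, setting $\varepsilon := \|g-h\|_\infty$, every $\phi\in\psh(X,\omega)$ with $\phi\leq g$ satisfies $\phi-\varepsilon\leq h$ and $\phi-\varepsilon\in\psh(X,\omega)$; taking the supremum over such $\phi$ and then the usc regularization yields $P_\omega(g)-\varepsilon\leq P_\omega(h)$, and the reverse bound follows by symmetry.

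Next, I would approximate $f$ uniformly by a sequence $(f_j)$ of smooth functions on $X$, which is standard on a compact manifold (partition of unity plus convolution in local charts). By the cited results of \cite{Tos18MRL} and \cite{CZ19China}, for each $j$ the envelope $P_\omega(f_j)$ belongs to $C^{1,1}(X)$, and in particular to $C^0(X)$. The contraction property then gives
\[
\|P_\omega(f)-P_\omega(f_j)\|_{L^\infty(X)} \leq \|f-f_j\|_{L^\infty(X)} \longrightarrow 0,
\]
so $P_\omega(f)$ is a uniform limit of continuous functions on the compact space $X$ and is therefore continuous.

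There is no real obstacle in this argument: the only nontrivial input is the $C^{1,1}$ regularity of $P_\omega(f_j)$ for smooth data, which is already available from \cite{CZ19China}. The remaining steps are a routine combination of smooth approximation with the $L^\infty$-contraction property of the envelope operator.
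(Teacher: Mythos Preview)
Your proof is correct and follows essentially the same approach as the paper: approximate $f$ uniformly by smooth $f_j$, use the $L^\infty$-contraction property of the envelope to get $\|P_\omega(f_j)-P_\omega(f)\|_{L^\infty}\leq \|f_j-f\|_{L^\infty}$, and conclude since $P_\omega(f_j)$ is continuous by \cite{CZ19China}. Your write-up is simply more detailed, spelling out the contraction inequality that the paper states without proof.
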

\begin{proof}
Let $f_j\in C^{\infty}(X)$ be a sequence of smooth functions which converges uniformly to $f$. 
Since  $P_\omega(f_j)$ is continuous and 
\[
\| P_\omega(f_j)- P_\omega(f)\|_{L^{\infty}(X)} \leq \|f_j-f\|_{L^{\infty}(X)},
\] 
we see that $P_{\omega}(f)$ is continuous. 
\end{proof}

\begin{theorem}\label{thm: Holder envelope}
Assume that $f\in C^{0,\alpha}(X)$ for some $\alpha\in (0,1)$. Then $P_\omega(f)\in C^{0,\alpha}(X)$.
\end{theorem}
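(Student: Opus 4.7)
The plan is to adapt the argument of the proof of Theorem \ref{thm: Holder exponent proof}, replacing the comparison with a Monge--Amp\`ere solution by the envelope property of $u := P_\omega(f)$. Since $u$ is continuous on $X$, after subtracting a constant one may assume $u \le 0$. For small $\delta > 0$ I would introduce Demailly's regularization $\rho_\delta u$ and the Kiselman--Legendre transform $U_\delta$ defined by \eqref{eq: KL} with $c = \delta^\alpha$, and set $u_\delta := U_\delta/(1+2A\delta^\alpha)$, which is $\omega$-psh by the same computation as in the proof of Theorem \ref{thm: Holder exponent proof}. The goal is to establish $\rho_\tau u - u \le C\tau^\alpha$ for all sufficiently small $\tau$, after which Lemma \ref{lem: GKZ08} immediately gives $u \in C^{0,\alpha}(X)$.

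The key new ingredient is an upper bound on $U_\delta$ coming from the envelope relation. Since $u \le f$ and $f \in C^{0,\alpha}(X)$, positivity of Demailly's kernel gives $\rho_t u \le \rho_t f \le f + \|f\|_{C^{0,\alpha}}\, t^\alpha$ for every $t \in (0,\delta]$. Evaluating the infimum defining $U_\delta$ at $t = \delta$ yields $U_\delta \le \rho_\delta u \le f + C\delta^\alpha$. Since $u$ (and hence $U_\delta$) is uniformly bounded, the identity $u_\delta = U_\delta/(1+2A\delta^\alpha)$ forces $|u_\delta - U_\delta| = O(\delta^\alpha)$, so $u_\delta \le f + C_0\delta^\alpha$ on $X$ for a uniform constant $C_0$. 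Now $u_\delta - C_0\delta^\alpha \in \PSH(X,\omega)$ and lies below $f$, so the defining property of $P_\omega(f)$ forces $u_\delta - C_0\delta^\alpha \le u$, that is, $u_\delta \le u + C_0\delta^\alpha$. Using $u \le 0$ to absorb the factor $(1+2A\delta^\alpha)$ into an additive correction, this upgrades to
\[
U_\delta \le u + C_1\delta^\alpha \qquad \text{on } X,
\]
for some uniform $C_1 > 0$.

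To conclude I would transcribe the contradiction argument at the end of the proof of Theorem \ref{thm: Holder exponent proof}. Fix $b > 0$ large enough so that $4Ab > C_1$, set $s := e^{-5Ab}$ and define $F(\delta) := \{\rho_{s\delta} u \ge u + 5Ab\,\delta^\alpha\}$. The same Kiselman--Legendre computation as in that proof (combining the Demailly monotonicity of $t \mapsto \rho_t u + Kt^2$ on $[s\delta,\delta]$ with the lower bound $-\delta^\alpha\log(t/\delta) \ge 5Ab\,\delta^\alpha$ on $[0,s\delta]$) shows that on $F(\delta)$ one has $U_\delta \ge u + 4Ab\,\delta^\alpha$. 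Together with the displayed bound, the choice of $b$ forces $F(\delta) = \emptyset$ for every $\delta$ small enough, which reads $\rho_\tau u - u < 5Ab\,s^{-\alpha}\,\tau^\alpha$ on $X$ for all small $\tau = s\delta$. Lemma \ref{lem: GKZ08} then yields $u \in C^{0,\alpha}(X)$.

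The principal difficulty I anticipate is the bookkeeping in the middle step: translating $u_\delta \le u + C_0\delta^\alpha$ into $U_\delta \le u + C_1\delta^\alpha$ requires absorbing the multiplicative correction $(1+2A\delta^\alpha)$ into an additive $O(\delta^\alpha)$ term, which is precisely why the normalization $u \le 0$ is convenient (otherwise a term $2A\delta^\alpha\|u\|_\infty$ appears, still $O(\delta^\alpha)$ but with an inelegant dependence). Once this estimate is in place, the rest is a formal reprise of Theorem \ref{thm: Holder exponent proof}, with the envelope inequality $P_\omega(f) \ge \phi$ (valid for every $\omega$-psh $\phi \le f$) playing the role formerly played by the stability estimate of Theorem \ref{thm: GLZ Hermitian}.
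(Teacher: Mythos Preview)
Your proposal is correct and follows essentially the same strategy as the paper: use Demailly's regularization and the Kiselman--Legendre transform to produce an $\omega$-psh competitor lying below $f + O(\delta^\alpha)$, invoke the envelope property to get $U_\delta \le u + C_1\delta^\alpha$, and then deduce $\rho_\tau u - u = O(\tau^\alpha)$ so that Lemma \ref{lem: GKZ08} applies.

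There are two cosmetic differences worth noting. First, the paper works with an approximating sequence $\phi^j \nearrow P_\omega(f)$ obtained from Choquet's lemma (following \cite{BD12}) and only passes to the limit at the end, whereas you work directly with $u = P_\omega(f)$; since $u$ is already known to be continuous and $\omega$-psh with $u \le f$, your shortcut is legitimate and slightly cleaner. Second, to pass from $U_\delta - u \le C_1\delta^\alpha$ to $\rho_\tau u - u = O(\tau^\alpha)$, the paper locates the minimizer $t_0$ in the Kiselman--Legendre infimum and shows $t_0 \ge a\delta$ (as in \cite{KN18AIF}), while you recycle the $F(\delta)$ contradiction argument from the proof of Theorem \ref{thm: Holder exponent proof}. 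Both routes are equivalent reformulations of the same monotonicity estimate for $t \mapsto \rho_t u + Kt^2$; neither buys anything the other does not.
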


\begin{proof}
It follows from Choquet's lemma and the definition of the psh envelope that there exists a  sequence of $\omega$-psh functions $(\phi^j)_{j\in \mathbb{N}}$ such that $P_{\omega}(f)=(\sup_j{\phi^j})^*$ , $\phi^j\leq f$ and $\| \phi^j\|_\infty\leq C(\|f\|_\infty) $. Replacing $\phi^j$ by $(\sup_{k\leq j} \phi^k)^*$ we can assume that $\phi^j \nearrow P_\omega(f)$. Since $P_{\omega}(f)$ is continuous on $X$ we also have, by Dini's theorem, that $\phi^j$ converges uniformly to $P_{\omega}(f)$. 

For a $\omega$-psh function $u$ we consider the convolution $\rho_{t}u$ defined as in \eqref{eq: Demailly} and the Kiselman-Legendre transform defined as in \eqref{eq: KL}. Since $\phi^j\leq f$ and $f\in C^{0,\alpha}(X)$, we have 
\begin{equation}\label{ineq_1}
\rho_{\delta} \phi^j \leq \rho_{\delta} f\leq f+ C\delta^\alpha\|f\|_{0,\alpha},
\end{equation}
where $C$ depends only on $X,\omega$.

We now use  the  Kiselman-Legendre transform $\Phi^j_{\delta,c}:= \Phi_{\delta,c}(\phi^j)$.  
From  \eqref{eq: KL}, with $t=\delta$, we have that $ \Phi^j_{\delta,c} \leq \rho_\delta  \phi^j$. It follows from \cite[Lemma 4.1]{KN19} that 
\[
\omega+dd^c \Phi^j_{\delta, c}\geq- (Ac+2K\delta)\omega,
\]
where $A$ is a positive curvature constant.

We now fix  $c=(\delta^\alpha -2K\delta)/A$ so that $Ac+2K\delta =\delta^\alpha$.  We have 
$$
\omega + dd^c \Phi^j_{\delta,c} \geq - \delta^{\alpha} \omega.
$$
Setting
$$
\varphi^j_{\delta} := (1-\delta^\alpha)\Phi^j_{\delta,c}
$$
we then have $\omega +dd^c\varphi^j_{\delta}  \geq \delta^{2\alpha} \omega$ and $ \|\varphi^j_{\delta} - \Phi^j_{\delta,c}\|\leq C_0 \delta^\alpha,$ where $C_0$ depends on $\|\phi^j\|_\infty$.
From \eqref{ineq_1} and the fact that $\Phi^j_{\delta,c} \leq \rho_{\delta} \phi^j$, we infer 
$\varphi^j_{\delta} -C_1\delta^\alpha  \leq  f$, where $C_1$ depends only on $|f\|_{0,\alpha}$, $\|\phi^j\|_\infty$ and $A$.  Therefore we get
\begin{equation}
\varphi^j_{\delta}-C_1\delta^{\alpha}   \leq P_{\omega}(f)
\end{equation}
 by the definition of $ P_\omega(f)$ and the fact that  $\varphi^j_{\delta}$ is  $\omega$-psh.
 This implies that 
 \begin{equation}\label{ineq_Phi}
  \Phi^j_{\delta,c}- P_\omega(f)\leq  C_2\delta^\alpha,
 \end{equation}
 where $C_2$  depending only $C_1$ and $\|\phi^j\|_\infty$. Since $\phi^j$ converges uniformly to $P_{\omega}(f)$ we infer
 \begin{equation}\label{ineq_key}
 \Phi^j_{\delta,c}-\phi^j\leq  2C_2\delta^\alpha,
 \end{equation}
 for $j$  sufficiently large. 

Following \cite{KN18AIF} we now use \eqref{ineq_key} to estimate  $\rho_\delta \phi^j- P_\omega(f)$.
For any $x\in X$,  the minimum in the definition of $\Phi_{\delta,c}^j$ achieves at $t_0=t_0(x,j)$. It follows from \eqref{ineq_key} that  
\begin{eqnarray}
 \rho_{t_0}\phi^j +K(t_0-\delta)+K(t_0^2-\delta^2) -c\log (t_0/\delta) -\phi^j \leq C_3\delta^\alpha, 
\end{eqnarray}
where $C_3$ depends only on $\|f\|_{0,\alpha}$, $\|\phi^j\|_\infty$.
 Since $\rho_t \phi^j +Kt^2+Kt -\phi^j\geq 0$, we have 
 $$c\log \frac{t_0}{\delta}\geq -C_4\delta^\alpha.$$
For $\delta$ small enough we have  $c\geq \delta^\alpha/(2A)$, hence 
\begin{equation}
t_0\geq a \delta, \quad \text{for}\,\,  a=e^{-2AC_4}.
\end{equation}
Since $\rho_t+ Kt^2+Kt$ is increasing in $t$ and $t_0\geq a \delta$, we infer
\begin{eqnarray*}
\rho_{a \delta}\phi^j +Ka \delta+K(a \delta)^2 -P_\omega(f)&\leq&\rho_{t_0}\phi^j +Kt_0+Kt_0^2-P_\omega(f)\\
&\leq &\Phi^j_{\delta, c}- P_\omega(f) - c\log a\\
&\leq & C_5\delta^\alpha,
\end{eqnarray*}
where $C_5$ depends only on $\|f\|_{0,\alpha}$, $\|\phi^j\|_\infty$, $K$, $A$, and in the last line we have used \eqref{ineq_Phi}. 
Since $\phi^j \nearrow P_\omega
(f )$, we have that $\rho_\delta \phi^j $ converges to $\rho_\delta P_\omega(f) $ as $j\rightarrow \infty$.   Therefore, letting $j$ tend to $\infty$, and then replacing $a\delta$ by $\delta$ we get 
\begin{equation} \label{ineq:lower_bound}
\rho_\delta  P_{\omega}(f) -C_6\delta^\alpha \leq P_\omega( f),
\end{equation}
where $C_6$ depends only on $\|f\|_{0,\alpha}$, $K$ and $A$. 
Invoking Lemma \ref{lem: GKZ08} below we conclude that $P_{\omega}(f) \in {\rm Lip}_{\alpha}(X)$. 
\end{proof}

\begin{lemma}\label{lem: GKZ08}
	Assume that $u$ is a bounded $\omega$-psh function on $X$ such that $\rho_t u \leq u + C_0t^{\alpha}$ for some positive constants $C_0$ and $0<\alpha <1$. Then $u\in {\rm Lip}_{\alpha}(X)$. 
\end{lemma}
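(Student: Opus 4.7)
This is essentially the statement \cite[Lemma 4.2]{GKZ08}, which establishes the same conclusion in the K\"ahler setting; the argument carries over to Hermitian $\omega$ because the only ingredients are the monotonicity of $t\mapsto \rho_t u + Kt^2$, the submean inequality for (local) psh functions, and standard properties of convolution with a smooth radial kernel. As the claim is local I would first use a partition of unity to reduce to working in a coordinate ball, where after adding a smooth potential $u$ may be treated as a bounded plurisubharmonic function, and Demailly's $\rho_r u$ coincides with the usual convolution $u\ast\chi_r$ up to an $O(r^2)$ correction coming from the nonholomorphic part of $\mathrm{exph}$.

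For two nearby points $x, y$ with $s := |x-y|$ small and an auxiliary scale $r$ to be chosen, I would split
\begin{equation*}
u(y) - u(x) = \bigl(u(y) - \rho_r u(y)\bigr) + \bigl(\rho_r u(y) - \rho_r u(x)\bigr) + \bigl(\rho_r u(x) - u(x)\bigr).
\end{equation*}
The first bracket is $\leq Kr^2$ by the submean inequality (equivalently, by the monotonicity $\rho_t u + Kt^2 \nearrow$), while the third is $\leq C_0 r^\alpha$ by hypothesis. For the middle term, writing
\begin{equation*}
\rho_r u(y) - \rho_r u(x) = \int \bigl(u(z) - u(x)\bigr)\bigl[\chi_r(y-z) - \chi_r(x-z)\bigr]\,dz,
\end{equation*}
and using $\|\nabla \chi_r\|_{L^\infty} \leq Cr^{-2n-1}$, the problem reduces to the oscillation estimate
\begin{equation*}
\int_{B(x,2r)} |u(z) - u(x)|\,dz \leq Cr^{2n+\alpha}.
\end{equation*}
Granting it, one obtains $|\rho_r u(y) - \rho_r u(x)| \leq Cs\, r^{\alpha-1}$; combining the three bounds and choosing $r = s$ gives $u(y) - u(x) \leq Cs^\alpha$, and interchanging $x$ and $y$ completes the argument.

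The main difficulty is producing the sharp $L^1$-oscillation bound. A naive $L^\infty$ estimate on $u$ yields only $|\rho_r u(y) - \rho_r u(x)| \leq C\|u\|_\infty s/r$, and the resulting balancing produces the suboptimal exponent $\alpha/(1+\alpha)$ instead of $\alpha$. Plurisubharmonicity is crucial here: the positive part $(u-u(x))^{+}$ is immediately controlled by the submean inequality and the hypothesis applied at $x$, while the negative part $(u(x)-u)^{+}$ requires a chaining argument using the one-sided bound $\rho_t u - u \leq C_0 t^\alpha$ at several base points and at a range of scales $t$. This is exactly the content of \cite[Lemma 4.2]{GKZ08}, and I would quote it rather than reproving it in full.
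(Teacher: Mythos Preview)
Your reference to \cite[Lemma 4.2]{GKZ08} is appropriate---the paper itself says the argument is implicit in \cite{GKZ08} and \cite{DDGKPZ14}---but the sketch you give does not match the actual proof there (or the paper's), and it contains a gap.

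The paper argues by contradiction on the modulus of continuity $\tau(\delta)=\sup\{|u(x)-u(y)|:d(x,y)\leq\delta\}$. At a near-extremal pair $x_\delta,y_\delta$ with $u(y_\delta)-u(x_\delta)=\tau(\delta)$, one uses the ball inclusion $B(y_\delta,b\delta)\subset B(x_\delta,(b+1)\delta)$, the submean inequality at $y_\delta$, and the fact that every point of the larger ball lies within $(b+2)\delta$ of $y_\delta$, to obtain
\[
\mu_B\bigl(u;x_\delta,(b+1)\delta\bigr)\;\geq\; u(y_\delta)-\Bigl(1-\tfrac{b^{2n}}{(b+1)^{2n}}\Bigr)\tau\bigl((b+2)\delta\bigr).
\]
Combined with the hypothesis $\rho_{2r}u(x_\delta)-u(x_\delta)\leq C r^\alpha$ this yields the functional inequality $h(\delta)\leq\tfrac12\,h(c\delta)+C$ for $h(\delta)=\delta^{-\alpha}\tau(\delta)$ and a fixed $c>1$, which iterates to a contradiction with $\limsup h=+\infty$. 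No $L^1$-oscillation bound is proved or used.

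Your three-term decomposition is a legitimate opening, but the claimed oscillation estimate $\int_{B(x,2r)}|u-u(x)|\,dz\leq Cr^{2n+\alpha}$ is not established, and your description of how to get it is incorrect. The submean inequality and the hypothesis at $x$ control only the \emph{signed} integral, $0\leq\int_{B}(u-u(x))\leq Cr^{2n+\alpha}$; they say nothing about $\int_{B}(u-u(x))^{+}$ alone, since the positive and negative parts can each be large while their difference stays small. The only pointwise handle on $(u(z)-u(x))^{+}$ is $\tau(2r)$ itself, and if you insert that you obtain (after choosing $r$ a large multiple of $s=|x-y|$) precisely the same functional inequality $h(s)\leq\tfrac12 h(cs)+C$ that the paper derives directly. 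So your route, once completed correctly, collapses onto the paper's; as written it defers the genuine content back to the citation.
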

The proof of the lemma was implicitly written in \cite{DDGKPZ14}, \cite{GKZ08}. We include it for completeness. 
\begin{proof}
	We can assume that $u\leq 0$. Let $d$ be the Riemann distance on $X$ induced by  the metric $\omega$. Define 
	\[
	\tau(\delta) := \sup \{|u(x)-u(y)| \setdef x,y \in X, d(x,y) \leq \delta\}, \ \delta>0. 
	\]
	We assume by contradiction that $\limsup_{\delta \to 0^+} \delta^{-\alpha}\tau(\delta) =+\infty$. For each $\delta>0$ we can find $x_{\delta}\in X$, $y_{\delta}\in X$ such that $d(x_{\delta},y_{\delta}) \leq \delta$ and $\tau(\delta) = u(y_{\delta})- u(x_{\delta}) >0$. We can thus find  $x_0\in X$ and a sequence $\delta_j\searrow 0$ such that  
	\[
	\lim_{j\to+\infty}d(x_{\delta_j},x_0)=\lim_{j\to +\infty} d(y_{\delta_j},x_0) =0 \; \text{and} \lim_{j\to +\infty} \delta_j^{-\alpha}|u(x_{\delta_j})-u(y_{\delta_j})|=+\infty.
	\]
	 Let $B\subset X$ be a small ball around $x_0$ which will be identified with  the unit ball $\mathbb{B}$ of $\mathbb{C}^n$ via a biholomorphism. Up to adding a smooth function we can now view $u$ as a psh function in $\mathbb{B}$ and $d(x,y) \simeq  \|x-y\|$ for $x,y\in \mathbb{B}$. It follows from \cite[Remark 4.6]{Dem94} that $\rho_r u(x_{\delta}) = u\star  \rho_r (x_{\delta}) + O(r^2)$. 
	Fix $b>1$ so large that 
	\[
	(b+2)^{\alpha} \left (1- \frac{b^{2n}}{(b+1)^{2n}} \right ) < \frac{1}{2}. 
	\]
	Fix $\delta>0$ so small that $2(b+1) \delta <1$. For $\xi\in \mathbb{B}$ we denote (see \cite{Dembook}, page 32) 
	\[
	\mu_S(u;\xi,r) := \frac{1}{\sigma_{2n-1}} \int_{\mathbb{S}} u(\xi +rx) d\sigma(x), 
	\]
	\[
	\mu_B(u;\xi,r) := \frac{1}{V_{2n}r^{2n}} \int_{B(\xi,r)} u(x) dV(x). 
	\]
	Here, $\sigma$ is the area measure of the unit sphere $\mathbb{S}=\partial \mathbb{B}$, $\sigma_{2n-1}= \sigma(S(0,1))$, $V_{2n} = \vol(\mathbb{B})$.  
	Note that $\mu_S\geq \mu_B$ and these are non-decreasing in $r$. 
	By the mean value inequality we have that, for  $r=(b+1)\delta$, 
	\begin{flalign*}
		\mu_B (u;x_{\delta},r) &=  \frac{1}{V_{2n} r^{2n}} \int_{B(x_{\delta},r)} u(x) dV(x)\\
		&= \frac{1}{V_{2n} r^{2n}}  \left ( \int_{B(y_{\delta}, b\delta)} u(x) dV(x) +   \int_{B(x_{\delta},r) \setminus B(y_{\delta}, b\delta)} u(x) dV(x) \right )\\
		&\geq  \frac{b^{2n}}{(b+1)^{2n}} u(y_{\delta})  + \left (1-  \frac{b^{2n}}{(b+1)^{2n}} \right ) (u(y_{\delta}) -\tau(r+\delta))\\
		&= u(y_{\delta}) -  \left (1-  \frac{b^{2n}}{(b+1)^{2n}} \right ) \tau((b+2)\delta). 
	\end{flalign*}
	Since $\mu_S(u;x_{\delta},t) -u(x_{\delta}) \geq 0$ and non decreasing in $t>0$, we have 
	\begin{eqnarray*}
	 u\star \rho_{2r} (x_{\delta}) -u(x_{\delta}) & = &\sigma_{2n-1} \int_0^1 (\mu_S (u;x_{\delta},2tr)-u(x_{\delta})) t^{2n-1} \rho(t) dt\\
		&\geq & (\mu_S(u;x_{\delta},r) -u(x_{\delta})) \sigma_{2n-1} \int_{\frac{1}{2}}^1 t^{2n-1}\rho(t)dt\\
		&\geq & C_2 (\mu_B(u;x_{\delta},r) -u(x_{\delta}))\\
		&\geq & C_2\left (\tau(\delta)  - \frac{1}{2} \frac{\tau((b+2)\delta)}{(b+2)^{\alpha}} \right). 
	\end{eqnarray*}
	Using these estimates and the assumption that $\rho_{2r}u(x_{\delta}) \leq u(x_{\delta}) + C_0(2r)^{\alpha}$ we arrive at 
	\[
	\tau(\delta) - \frac{1}{2} \frac{\tau((b+2)\delta)}{(b+2)^{\alpha}} \leq  C_5 \delta^{\alpha}. 
	\]
	We set $h(\delta)= \delta^{-\alpha} \tau (\delta)$. For $\delta>0$ small enough, say $\delta \in (0,\varepsilon_0]$ for some $\varepsilon_0>0$ fixed, and $c=b+2$ we then have  
	\[
	h(\delta)  \leq \frac{1}{2} h(c\delta)+ C_5.
	\]
	Applying this several times we obtain, for all $k \in \mathbb{N}$ with $c^{k-1}\delta\leq \varepsilon_0$,
	\begin{equation}\label{eq: GKZ08 13052020}
	h(\delta) \leq 2^{-k} h(c^k\delta) + 2 C_5, \; k \in \mathbb{N}. 
	\end{equation}
	We are now ready to derive a contradiction. We set 
	$$
	C_6:= \sup_{\delta\in [\varepsilon_0,c\varepsilon_0]} h(\delta)<+\infty.
	$$  
We have assumed that there exists a sequence $\delta_j\searrow 0$ such that $h(\delta_j)\to +\infty$. Take $j$ so large that $\delta_j<\varepsilon_0$ and $h(\delta_j) >2C_5+C_6+1$.  We choose $k\in \mathbb{N}$ such that 
\[
\frac{\log(\varepsilon_0/\delta_j)}{\log c}  \leq k \leq  \frac{\log(\varepsilon_0/\delta_j)}{\log c} + 1. 
\] 
Then  $c^k \delta_j \in [\varepsilon_0,c\varepsilon_0]$.  
From this and \eqref{eq: GKZ08 13052020}  we obtain $h(\delta_j) \leq 2^{-k}h(c^k\delta_j) +2C_5\leq 2C_5+C_6$, a contradiction. 	
\end{proof}

\bibliographystyle{//Users/lu/Desktop/Bib/amsplain_nodash.bst}
\bibliography{//Users/lu/Desktop/Bib/Biblio.bib}

\end{document}